\newcommand*{\mailto}[1]{\href{mailto:#1}{\nolinkurl{#1}}}
\newcommand{\bbN}{{\mathbb{N}}}
\newcommand{\bbR}{{\mathbb{R}}}
\newcommand{\bbC}{{\mathbb{C}}}
\newcommand{\bbZ}{{\mathbb{Z}}}
\newcommand{\sH}{\mathscr{H}}
\DeclareMathOperator{\rank}{rank}
\DeclareMathOperator{\dom}{dom}
\newcommand{\dott}{\,\cdot\,}
\newcommand{\Lp}{\text{\textnormal{L}}}
\newcommand{\AC}{\text{\textnormal{AC}}}
\newcommand{\no}{\notag}
\newcommand{\lb}{\label}
\newcommand{\bi}{\bibitem}
\newcommand{\Hz}[1]{H_{Z,{#1}}}
\newcommand{\vhi}{\varphi}
\def\theequation{\@arabic\c@equation}
\numberwithin{equation}{section}
\newtheorem{theorem}{Theorem}[section]
\newtheorem{proposition}[theorem]{Proposition}
\newtheorem{lemma}[theorem]{Lemma}
\newtheorem{hypothesis}[theorem]{Hypothesis}
\theoremstyle{definition}
\newtheorem{definition}[theorem]{Definition}
\theoremstyle{remark}
\newtheorem{remark}[theorem]{Remark}
\begin{document}
 
\title[The Krein--von Neumann Extension of a Quasi-Differential Operator]{The Krein--von Neumann Extension of a Regular Even Order Quasi-Differential Operator}
 
 
\author[M.\ Cho]{Minsung Cho}
\address{Department of Mathematical Sciences,
Carnegie Mellon University, 5000 Forbes Avenue,
Pittsburgh, PA 15289, USA}
\email{\mailto{minsungc@andrew.cmu.edu}}
\urladdr{\url{http://www.andrew.cmu.edu/user/minsungc}}
\author[S.\ Hoisington]{Seth Hoisington}
\address{Department of Mathematics,
University of Virginia, Charlottesville, VA 22903, USA}
\email{\mailto{seh9gb@virginia.edu}}
\author[R.\ Nichols]{Roger Nichols}
\address{Department of Mathematics (Dept.~6956), The University of Tennessee at Chattanooga, 615 McCallie Ave., Chattanooga, TN 37403, USA}
\email{\mailto{Roger-Nichols@utc.edu}}
\urladdr{\url{https://sites.google.com/mocs.utc.edu/rogernicholshomepage/home}}
\author[B.\ Udall]{Brian Udall}
\address{Department of Mathematics,
Rice University,
6100 Main Street, Houston, TX 77005, USA}
\email{\mailto{bu3@rice.edu}}
 
 
\date{\today}
 
\@namedef{subjclassname@2020}{\textup{2020} Mathematics Subject Classification}
\subjclass[2020]{Primary 47B25, 47E05; Secondary 34B24, 34L40.}
\keywords{Krein--von Neumann extension, regular quasi-differential operator.}

\begin{abstract}
We characterize by boundary conditions the Krein--von Neumann extension of a strictly positive minimal operator corresponding to a regular even order quasi-differential expression of Shin--Zettl type.  The characterization is stated in terms of a specially chosen basis for the kernel of the maximal operator and employs a description of the Friedrichs extension due to M\"oller and Zettl.
\end{abstract}
 
\maketitle
 
 

\section{Introduction} \lb{s1}
 
A linear operator $S$ acting in a separable Hilbert space $(\sH,\langle\dott,\dott\rangle_{\sH})$ with dense domain $\dom(S)$ is said to be {\it nonnegative} if
\begin{equation}\lb{1.1}
\langle u,Su\rangle_{\sH}\geq 0,\quad u\in\dom(S).
\end{equation}
If $S$ satisfies the stronger condition that for some $\varepsilon \in (0,\infty)$,
\begin{equation}\lb{1.2}
\langle u,Su\rangle_{\sH} \geq \varepsilon \langle u,u\rangle_{\sH},\quad u\in \dom(S),
\end{equation}
then $S$ is said to be {\it strictly positive} and one writes $S\geq \varepsilon I_{\sH}$, where $I_{\sH}$ denotes the identity operator in $\sH$.  The condition \eqref{1.1} implies that $S$ is {\it symmetric},
\begin{equation}\lb{1.3}
\langle u,Sv\rangle_{\sH} = \langle Su,v\rangle_{\sH},\quad u,v\in \dom(S),
\end{equation}
and that
\begin{equation}\lb{1.4}
\dim(\ker(S^*-zI_{\sH}))\in \bbN_0\cup\{\infty\}
\end{equation}
is constant with respect to $z\in \bbC\backslash [0,\infty)$.  Here $S^*$ denotes the Hilbert space adjoint of $S$.  (If $S\geq \varepsilon I_{\sH}$ for some $\varepsilon \in (0,\infty)$, then the dimension \eqref{1.4} is constant with respect to $z\in \bbC\backslash [\varepsilon,\infty)$.)  In particular, the deficiency indices of $S$ are equal, and $S$ possesses a self-adjoint extension by von Neumann's theory of self-adjoint extensions \cite{Ne30}.  We shall assume that $S$ is unbounded with nonzero deficiency indices.  Otherwise, $S$ is essentially self-adjoint, meaning the closure of $S$, which we denote by $\overline{S}$, is the only self-adjoint extension of $S$.  In addition, since the self-adjoint extensions of $S$ and those of $\overline{S}$ are the same, we shall henceforth assume that the operator $S$ is closed.
 
If $S$ is nonnegative, then its {\it Friedrichs extension} $S_{\rm F}$ is constructed in a canonical way using form methods---a classic construction that goes back to the 1934 work of Friedrichs \cite{Fr30}.  For details of the construction, we refer to \cite[Section VI.2.3]{Ka80}, \cite[Section 10.4]{Sc12}, and \cite[Theorem 2.13]{Te14}.  One important characteristic of the Friedrichs extension is that $S_{\rm F}$ has the same lower bound as the symmetric operator $S$ (see \cite[Theorem 10.17(i)]{Sc12}).  Therefore, $S_{\rm F}$ is a nonnegative self-adjoint extension of $S$.
 
In his seminal work on nonnegative self-adjoint extensions, M.~G.~Krein \cite{Kr47}, \cite{Kr47a} showed that, among all nonnegative self-adjoint extensions of $S$, there exist two which are the {\it largest} and {\it smallest}---in the sense of order between nonnegative self-adjoint operators---such extensions of $S$.  Recall that if $A$ and $B$ are nonnegative self-adjoint operators in $(\sH,\langle\dott,\dott\rangle_{\sH})$, then $A\leq B$ if and only if
\begin{equation}\lb{1.5}
\big\langle u, (B+aI_{\sH})^{-1}u\big\rangle_{\sH} \leq \big\langle u, (A+aI_{\sH})^{-1}u\big\rangle_{\sH},\quad u\in \sH,\, a\in (0,\infty).
\end{equation}
The largest nonnegative self-adjoint extension of $S$ is the Friedrichs extension $S_{\rm F}$, and the smallest nonnegative self-adjoint extension of $S$, which we shall denote by $S_{\rm K}$, is known as the {\it Krein--von Neumann extension}.  Krein's result may be summarized as follows.
 
\begin{theorem}[\cite{Kr47}]\lb{T1.1}
If $S$ is a densely defined, closed, nonnegative operator in a separable Hilbert space $(\sH,\langle\dott,\dott\rangle_{\sH})$, then there exist two nonnegative self-adjoint extensions, $S_{\rm F}$ and $S_{\rm K}$, of $S$ which are the largest and smallest, respectively, nonnegative self-adjoint extensions of $S$.  A nonnegative self-adjoint operator $S'$ is a self-adjoint extension of $S$ if and only if
\begin{equation}\lb{1.6}
S_{\rm K} \leq S' \leq S_{\rm F}.
\end{equation}
The operators $S_{\rm F}$ and $S_{\rm K}$ are uniquely determined by \eqref{1.6}.  If, in addition, $S\geq \varepsilon I_{\sH}$ for some $\varepsilon \in (0,\infty)$, then $S_{\rm F}\geq \varepsilon I_{\sH}$ and
\begin{align}
\dom(S_{\rm F})&= \dom(S) \dotplus (S_{\rm F})^{-1}\ker(S^*),\lb{1.7}\\
\dom(S_{\rm K})&= \dom(S) \dotplus \ker(S^*),\lb{1.8}
\end{align}
where $\dotplus$ denotes the direct sum of subspaces in $\sH$.
\end{theorem}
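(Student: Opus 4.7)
The plan is to establish existence of the two extreme extensions by explicit construction and then derive the ordering \eqref{1.6} along with the domain formulas \eqref{1.7}, \eqref{1.8}. I would work throughout under the strict positivity hypothesis $S\geq\varepsilon I_{\sH}$; the merely nonnegative case reduces to this by replacing $S$ with $S+\delta I_{\sH}$ and letting $\delta\downarrow 0$ after the ordering has been secured. Under strict positivity, $\ran(S)$ is closed, so $\sH=\ran(S)\oplus\ker(S^*)$ is an orthogonal direct sum and $\dom(S)\cap\ker(S^*)=\{0\}$.

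For $S_{\rm F}$ I would apply the Friedrichs form construction: the symmetric form $\mathfrak{s}[u,v]=\langle u,Sv\rangle_{\sH}$ on $\dom(S)$ is closable by nonnegativity, and Kato's first representation theorem \cite[Section VI.2]{Ka80} produces the unique self-adjoint operator $S_{\rm F}$ associated with its closure, satisfying $S\subseteq S_{\rm F}\subseteq S^*$ and $S_{\rm F}\geq\varepsilon I_{\sH}$. For $S_{\rm K}$ I would directly define
\begin{equation*}
\dom(S_{\rm K})=\dom(S)\dotplus\ker(S^*),\qquad S_{\rm K}(u_0+v)=Su_0,\quad u_0\in\dom(S),\ v\in\ker(S^*),
\end{equation*}
which by construction yields \eqref{1.8}. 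Well-definedness and symmetry follow from $\ran(S)\perp\ker(S^*)$; to verify $S_{\rm K}=S_{\rm K}^*$, I would take $w\in\dom(S_{\rm K}^*)$ with $S_{\rm K}^*w=z$, observe that $z$ must annihilate $\ker(S^*)$ and therefore lies in $\ran(S)$, write $z=Su_0^*$ with $u_0^*\in\dom(S)$, and conclude $w-u_0^*\in\ker(S^*)$, so that $w\in\dom(S_{\rm K})$.

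To establish \eqref{1.6}, let $S'$ be any nonnegative self-adjoint extension of $S$. The inequality $S'\leq S_{\rm F}$ would follow from form-domain inclusion: the closed form of $S'$ extends $\overline{\mathfrak{s}}$, which via Kato's second representation theorem translates into the resolvent inequality \eqref{1.5}. The reverse inequality $S_{\rm K}\leq S'$ is where the real content lies; after passage to the shifted resolvents $(S_{\rm K}+aI_{\sH})^{-1}$ and $(S'+aI_{\sH})^{-1}$, it amounts to showing that on $\ker(S^*)$ the Krein extension produces the pointwise largest nonnegative resolvent, a statement most cleanly proved via the shorted-operator/parallel-sum calculus. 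Uniqueness of $S_{\rm F}$ and $S_{\rm K}$ as order-extrema in \eqref{1.6} is then immediate. For \eqref{1.7} I would use the decomposition $\dom(S^*)=\dom(S_{\rm F})\dotplus\ker(S^*)$ (a consequence of $0$ being a regular point of $S_{\rm F}$) together with the observation that $(S_{\rm F})^{-1}\ker(S^*)\subseteq\dom(S_{\rm F})$ supplies a concrete complement to $\dom(S)$ inside $\dom(S_{\rm F})$.

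The main obstacle is the lower extremality $S_{\rm K}\leq S'$. Every other ingredient is either an instance of a named theorem or a direct computation, but the minimality of $S_{\rm K}$ is a genuinely operator-theoretic inequality where Krein's original insight enters. Any proof must eventually translate the abstract ordering \eqref{1.5} into a statement about how nonnegative self-adjoint extensions of $S$ act on $\ker(S^*)$, and the shorted-operator framework provides the most economical modern vehicle for doing so.
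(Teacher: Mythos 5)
The paper offers no proof of Theorem \ref{T1.1}: it is quoted as Krein's classical result \cite{Kr47}, with \cite{AGMST13} cited for details, so there is no in-paper argument to compare against. Judged on its own terms, your outline follows the standard modern route and most of its ingredients are sound: the Friedrichs form construction, the explicit definition of $S_{\rm K}$ on $\dom(S)\dotplus\ker(S^*)$ together with the adjoint computation establishing self-adjointness (using that $\ran(S)$ is closed under strict positivity), the form-inclusion argument for $S'\leq S_{\rm F}$, and the derivation of \eqref{1.7} from $\dom(S^*)=\dom(S_{\rm F})\dotplus\ker(S^*)$ are all correct as sketched.

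Three gaps remain, two of them substantive. First, the theorem is an \emph{if and only if}, and you only address one direction: you argue (modulo the next point) that every nonnegative self-adjoint extension of $S$ lies in the order interval $[S_{\rm K},S_{\rm F}]$, but never that a nonnegative self-adjoint operator $S'$ satisfying \eqref{1.6} is automatically an extension of $S$. This converse is a genuine part of Krein's theorem and needs its own argument (e.g., \eqref{1.6} squeezes the quadratic form of $S'$ between those of $S_{\rm K}$ and $S_{\rm F}$, which agree on $\dom(S)$; one then upgrades equality of quadratic forms on $\dom(S)$ to $S'\supseteq S$ via a Cauchy--Schwarz argument for the nonnegative form $\mathfrak{s}_{\rm F}-\mathfrak{s}'$). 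Second, the central inequality $S_{\rm K}\leq S'$ is only named, not proved; you correctly identify it as the heart of the matter, but ``most cleanly proved via the shorted-operator/parallel-sum calculus'' is a pointer to a proof rather than a proof, and the resulting uniqueness claim rests on it. Third, the reduction of the merely nonnegative case by ``replacing $S$ with $S+\delta I_{\sH}$ and letting $\delta\downarrow 0$'' is subtler than stated: unlike the Friedrichs extension, the Krein extension does not commute with shifts, so $(S+\delta I_{\sH})_{\rm K}-\delta I_{\sH}$ is a genuinely $\delta$-dependent family whose monotonicity, strong resolvent convergence as $\delta\downarrow 0$, and identification of the limit as the order-minimal nonnegative extension all require proof (this is the Ando--Nishio construction). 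None of these gaps indicates a wrong approach, but as written the proposal proves the easy half of the theorem and defers the hard half.
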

\noindent
For additional details, especially in connection with the Krein--von Neumann extension, we refer to the survey \cite{AGMST13}.

When $S$ is an ordinary differential operator, its self-adjoint extensions are usually characterized in terms of appropriate boundary conditions at the endpoints of the underlying interval.  Therefore, it is natural to try to determine the boundary conditions that characterize $S_{\rm F}$ and $S_{\rm K}$.  

Considerable attention has been given to the problem of identifying the boundary conditions that characterize $S_{\rm F}$; see, for example, \cite{MZ00}, \cite{MZ95a}, \cite{MZ95b}, \cite{NZ90}, \cite{YSZ15}, and \cite[Section 10.5]{Ze05}, to name only a few.  In particular, when $S=H_{Z,\min}$, where $H_{Z,\min}$ is the minimal operator generated by a regular even order Shin--Zettl quasi-differential expression $\tau_{{}_Z}$ (with matrix-valued coefficients) on the interval $[a,b]$, M\"oller and Zettl \cite{MZ95a} proved that $H_{Z,\min}$ is bounded from below if the leading coefficient of $\tau_{{}_Z}$ is positive definite almost everywhere.  In this case, if $\tau_{{}_Z}$ is of order $2N$ (for some $N\in \bbN$), then M\"oller and Zettl showed in \cite[Theorem 8.1]{MZ95a} that the Friedrichs extension $H_{Z,\rm F}$ of $H_{Z,\min}$ is characterized by boundary conditions on the first $N-1$ quasi-derivatives at the interval endpoints:
\begin{equation}
    y^{[j-1]}(a)=y^{[j-1]}(b)=0,\quad 1\leq j\leq N.
\end{equation}
This characterization extends an earlier result by Niessen and Zettl (see \cite[Theorem 2.1]{NZ90}).  The quasi-differential expression $\tau_{{}_Z}$ studied by M\"oller and Zettl is of a very general form and includes, as a special case, the classic quasi-differential expression studied by Naimark \cite{Na68}.

In recent years, more attention has been given to the problem of determining the boundary conditions that characterize $S_{\rm K}$.  The authors of \cite{CGNZ14} considered a regular three-coefficient Sturm--Liouville differential expression $\tau_{p,q,r}$ acting according to
\begin{equation}
    \tau_{p,q,r} f = \frac{1}{r}\left[-(f^{[1]})'+qf \right]\quad \text{on $[a,b]$},
\end{equation}
where $f^{[1]}=pf'$ denotes the quasi-derivative of $f$, $p>0$, $r>0$, $q$ is real-valued almost everywhere on the interval $[a,b]$, and $p^{-1}$, $q$, $r$ are integrable on $[a,b]$. Assuming the minimal operator $H_{p,q,r,\min}$ generated by $\tau_{p,q,r}$ is strictly positive, the Krein--von Neumann extension of $H_{p,q,r,\min}$ was characterized in terms of a specially chosen basis for the kernel of $(H_{p,q,r,\min})^*$.  Specifically, taking as a basis for the kernel of $(H_{p,q,r,\min})^*$ the set $\{u_1,u_2\}$ determined by
\begin{equation}\lb{1.11b}
    u_1(a)=u_2(b)=1,\quad u_1(b)=u_2(a)=0,
\end{equation}
it was shown in \cite[Example 3.3]{CGNZ14} that the Krein--von Neumann extension of $H_{p,q,r,\min}$ corresponds to coupled boundary conditions of the form:
\begin{equation}\lb{1.12b}
    \begin{pmatrix}
        f(b)\\
        f^{[1]}(b)
    \end{pmatrix}=
    \frac{1}{u_2^{[1]}(a)}
    \begin{pmatrix}
        -u_1^{[1]}(a) & 1\\
        u_2^{[1]}(a)u_1^{[1]}(b)-u_2^{[1]}(b)u_1^{[1]}(a) & u_2^{[1]}(b)\\
    \end{pmatrix}
    \begin{pmatrix}
        f(a)\\
        f^{[1]}(a)
    \end{pmatrix}.
\end{equation}

In \cite{EGNT13}, the characterization in \eqref{1.11b}--\eqref{1.12b} was extended to the generalized four-coefficient Sturm--Liouville differential expression $\tau_{p,q,r,s}$ acting according to
\begin{equation}
    \tau_{p,q,r,s} f = \frac{1}{r}\left[-(f^{[1]})'+sf^{[1]}+qf \right]\quad \text{on $[a,b]$},
\end{equation}
where in addition to the assumptions imposed on $p$, $q$, and $r$ above, the fourth coefficient $s$ is assumed to be real-valued almost everywhere and integrable on $[a,b]$ and the generalized quasi-derivative takes the form $f^{[1]}=p[f'+sf]$. Assuming the minimal operator $H_{p,q,r,s,\min}$ generated by $\tau_{p,q,r,s}$ is strictly positive, the Krein--von Neumann extension of $H_{p,q,r,s,\min}$ is characterized in \cite[Theorem 12.3]{EGNT13} by \eqref{1.11b}--\eqref{1.12b}.  A result analogous to \eqref{1.11b}--\eqref{1.12b} was shown to hold for singular three-coefficient Sturm--Liouville operators in \cite[Theorem 3.5(ii)]{FGKLNS21}, provided one replaces the values of the functions and their quasi-derivatives at the endpoints by appropriate generalized boundary values (see \cite[Theorem 2.12]{FGKLNS21} and \cite[Theorem 3.11]{GLN20}).  The Krein--von Neumann extension of the minimal operator associated with the pure differential expression $\tau_{{}_{2N}}$ of order $2N$ (where $N\in \bbN$ is fixed) acting according to
\begin{equation}\lb{1.13b}
    \tau_{{}_{2N}}f=(-1)^N\frac{d^{2N}}{dx^{2N}}f\quad \text{on $[a,b]$}
\end{equation}
was characterized by Granovskyi and Oridoroga in \cite{GO18a} (see also \cite{GO18b}).  Using an elegant argument based on Taylor polynomials, it is shown in \cite[Theorem 3.1(i)]{GO18a} that the Krein--von Neumann extension of the minimal operator $H_{2N,\min}$ corresponding to \eqref{1.13b} is characterized by the boundary conditions:
\begin{equation}\lb{1.14b}
    \begin{pmatrix}
        f(b) \\ f^{(1)}(b) \\ \vdots \\ f^{(2N-1)}(b)
        \end{pmatrix}=T_{\rm K}\begin{pmatrix}
        f(a) \\ f^{(1)}(a) \\ \vdots \\ f^{(2N-1)}(a)
        \end{pmatrix},
\end{equation}
where $T$ is the Toeplitz upper triangular matrix given by
\begin{equation}\lb{1.15b}
    T_{\rm K} = \left(\frac{(b-a)^{k-j}}{(k-j)!}\right)_{j,k=1}^{2N}.
\end{equation}
Lunyov \cite{Lu13} considered the expression $\tau_{{}_{2N}}$ in \eqref{1.13b} on the interval $[0,\infty)$ and used boundary triplet techniques to show that the Krein--von Neumann extension of the minimal operator is characterized by the boundary conditions (see \cite[Theorem 2]{Lu13})
\begin{equation}
    f^{(j)}(0)=0,\quad N\leq j\leq 2N-1.
\end{equation}
Finally, Ananieva and Budyika \cite[Proposition 5.3, part $(ii)$]{AB15} (see also \cite{AB16}) used boundary triplet techniques to characterize the Krein--von Neumann extension corresponding to the Bessel differential expression $-\tfrac{d^2}{dx^2} + \big(\nu^2-\tfrac{1}{4}\big)x^{-2}$, with the parameter $\nu \in [0,1)\backslash\{1/2\}$, on the interval $(0,\infty)$.

In this paper, we characterize by boundary conditions the Krein--von Neumann extension of the minimal operator $H_{Z,\min}$ generated by a regular Shin--Zettl quasi-differential expression $\tau_{{}_Z}$ of order $2N$ on the interval $[a,b]$ with $M\times M$ matrix-valued coefficients, assuming that the leading coefficient of $\tau_{{}_Z}$ is positive definite almost everywhere on $[a,b]$ and that $H_{Z,\min}$ is strictly positive.  Our approach is similar in spirit to \cite{CGNZ14}, \cite{EGNT13}, and \cite{FGKLNS21}, in that we also use a specially chosen basis for the kernel of $(H_{Z,\min})^*$ to formulate the boundary conditions for the Krein--von Neumann extension.  The characterization of the Friedrichs extension due to M\"oller and Zettl plays a key role in our construction.

We briefly summarize the contents of the remaining sections of this paper.  In Section \ref{s2}, we recall the basic background on Shin--Zettl quasi-differential expressions and their associated minimal and maximal operators.  In Section 3, we introduce a special basis for the kernel of $(H_{Z,\min})^*$ in Lemma \ref{L3.2} and state and prove our main result---a characterization of the Krein--von Neumann extension of $H_{Z,\min}$---in Theorem \ref{T3.4}.  We explore an equivalent characterization in Proposition \ref{p3.5}.  In Proposition \ref{P3.7}, we note the general fact that, given a densely defined strictly positive symmetric operator $S$, the Krein--von Neumann extension of $S$ and any strictly positive self-adjoint extension of $S$ are always relatively prime with respect to $S$.  Finally, in Section \ref{s4}, we consider applications of our main theorem to generalized four-coefficient regular Sturm--Liouville expressions with matrix-valued coefficients, a simple fourth-order differential expression, and the pure differential expression $\tau_{{}_{2N}}$ in \eqref{1.13b}.

\medskip
\noindent
\textbf{Notation:} If $X$ is a set and $m,n\in \bbN$, then $X^{m\times n}$ denotes the set of all $m\times n$ matrices with entries in $X$.  Thus, $G\in X^{m\times n}$ if and only if $G=(G_{j,k})_{j=1,k=1}^{m,n}$, where $G_{j,k}\in X$ for all $1\leq j\leq m$ and $1\leq k\leq n$.  In the special case when $n=1$, we will write $X^m$ instead of $X^{m\times 1}$.  For a fixed compact interval $[a,b]$ in $\bbR$, $\AC([a,b])$ denotes the set of all complex-valued functions that are absolutely continuous on $[a,b]$, $\Lp([a,b])$ denotes the set of all (equivalence classes of) Lebesgue measurable functions $f:[a,b] \to \bbC$, and $\Lp^1([a,b])$ denotes the set of all $f\in \Lp([a,b])$ such that $\int_{[a,b]} |f| < \infty$.  Here, and throughout, the integral is taken with respect to Lebesgue measure on $\bbR$, and ``a.e.''~is used as an abbreviation for the phrase ``almost everywhere with respect to Lebesgue measure.''  If $z\in \bbC$, then $\overline{z}$ denotes the complex conjugate of $z$.  If $m\in \bbN$, then $0_m$ and $I_m$ denote the zero and identity matrices, respectively, in $\bbC^{m\times m}$.  If $\mathcal{V}$ is a vector space, then $\dim(\mathcal{V})$ denotes the dimension of $\mathcal{V}$.  If $T:\mathcal{V}\to \mathcal{V}$ is a linear transformation on the vector space $\mathcal{V}$, then $\ker(T)$ denotes the kernel (i.e., null space) of $T$.  Finally, $\bbZ_{\geq 0}$ denotes the set of nonnegative integers, and ``$:=$'' means ``is defined to be equal to.''

\section{Even order Regular Quasi-Differential Operators} \lb{s2}
 
We begin by recalling several basic facts on even order regular quasi-differential operators.  This material may be found in many sources; we refer to \cite[Sections I.2, II, \& IV]{EM99}, \cite{EZ79}, \cite{GMP13}, \cite{MZ95a}, \cite{Sh38}, and \cite{Ze75} for detailed treatments, including proofs.  In fact, \cite{MZ95a} and \cite{MZ95b} contain all of the background required here.  To construct an even order regular quasi-differential expression of Shin--Zettl type, we introduce the following hypothesis which is assumed throughout Sections \ref{s2} and \ref{s3}.
 
\begin{hypothesis}\lb{h2.1}
$M,N\in \bbN$ are fixed, $[a,b]$ is a compact subinterval in $\bbR$, $W\in \Lp^1([a,b])^{M\times M}$ is positive definite a.e.~on $[a,b]$, and
\begin{equation*}
Z=(Z_{j,k})_{j,k=1}^{2N}\in \big[\Lp^1([a,b])^{M\times M}\big]^{2N\times 2N}
\end{equation*}
satisfies the following conditions:
\medskip
\begin{itemize}
\item[(A1)]  $Z_{j,j+1}$ is invertible a.e.~on $[a,b]$ for $1\leq j\leq 2N-1$
\medskip
\item[(A2)]  $Z_{j,k}=0_M$ a.e.~on $[a,b]$ for $2\leq j+1<k\leq 2N$
\medskip
\item[(A3)]  $Z = J_{M,2N}Z^*J_{M,2N}$, where
\begin{equation}\lb{2.1}
J_{M,2N}=\big((-1)^j\delta_{j,2N+1-k} I_M\big)_{j,k=1}^{2N}\in \big[\bbC^{M\times M}\big]^{2N\times 2N}.
\end{equation}
\end{itemize}
\end{hypothesis}
Assuming Hypothesis \ref{h2.1}, the quasi-derivatives generated by $Z$ are defined as follows.  Set
\begin{equation}\lb{2.2}
y^{[0]}_Z:=y,\quad y\in \mathfrak{D}^{[0]}_Z([a,b]):=\Lp([a,b])^M,
\end{equation}
and define $y_Z^{[j]}$ for $1\leq j\leq 2N$ inductively by
\begin{equation}\lb{2.3}
\begin{split}
y^{[j]}_Z&:=Z_{j,j+1}^{-1}\Bigg[\big(y^{[j-1]}_Z\big)'-\sum_{k=1}^j Z_{j,k}y^{[k-1]}_Z\Bigg],\\
y\in \mathfrak{D}_Z^{[j]}([a,b])&:=\left\{g\in\mathfrak{D}_Z^{[j-1]}([a,b])\,\big|\, g^{[j-1]}_Z\in \text{AC}([a,b])^M\right\},
\end{split}
\end{equation}
where $Z_{2N,2N+1}:=I_M$ a.e.~on $[a,b]$ and the prime denotes differentiation with respect to the independent variable on $[a,b]$. 
 
\smallskip
\noindent
{\bf Notational convention:} Since $Z$ is fixed, for ease of notation, we shall, from this point on, simply write $y^{[j]}$ for the $j$th quasi-derivative of $y$, instead of $y_Z^{[j]}$.
\medskip
 
The quasi-differential expression $\tau_{{}_Z}$ generated by $Z$ is defined by
\begin{equation}\lb{2.4}
\tau_{{}_Z} y:=(-1)^NW^{-1}y^{[2N]},\quad y\in \mathfrak{D}_Z^{[2N]}([a,b]),
\end{equation}
and $Z_{N,N+1}$ is called the {\it leading coefficient} of $\tau_{{}_Z}$.

Introducing the Lagrange bracket $[\dott,\dott]_Z$ by
\begin{equation}\lb{2.5}
[f,g]_Z=(-1)^N\sum_{j=0}^{2N-1}(-1)^{1-j}{\left(g^{[2N-j-1]}\right)}^*f^{[j]},\quad f,g\in \mathfrak{D}_Z^{[2N]}([a,b]),
\end{equation}
where ${}^*$ denotes the Hermitian transpose of a matrix, it follows that $[f,g]_Z\in \AC([a,b])^M$ for all $f,g\in \mathfrak{D}_Z^{[2N]}([a,b])$ and the Lagrange identity holds in the following form.
 
\begin{lemma}[Lagrange identity, {\cite[Lemma 3.3]{MZ95a}}]\lb{l2.2}
Assume Hypothesis \ref{h2.1}.  If $f,g\in \mathfrak{D}_Z^{[2N]}([a,b])$, then
\begin{equation}\lb{2.6}
g^*W(\tau_{{}_Z}f) - (\tau_{{}_Z}g)^*Wf = [f,g]_Z'\,\text{ a.e.~on $[a,b]$}.
\end{equation}
\end{lemma}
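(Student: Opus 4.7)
The plan is to recast the identity in a compact matrix form that exposes the roles of (A2) and (A3) transparently. First, since $W$ is positive definite (hence self-adjoint) a.e.\ on $[a,b]$, equation \eqref{2.4} gives
\[
g^* W(\tz f) - (\tz g)^* W f = (-1)^N\bigl[g^* f^{[2N]} - (g^{[2N]})^* f\bigr],
\]
so it suffices to prove that this expression equals $[f,g]_Z'$. I would then assemble the quasi-derivatives into block column vectors $F := (f^{[0]}, f^{[1]}, \dots, f^{[2N-1]})^T$ and $G := (g^{[0]}, g^{[1]}, \dots, g^{[2N-1]})^T$ with entries in $\bbC^M$. A direct comparison of indices against \eqref{2.1} and \eqref{2.5} yields
\[
[f,g]_Z = (-1)^{N-1}\, G^* J_{M,2N}\, F.
\]
The recursion \eqref{2.3}, together with (A2) (which makes $Z$ lower Hessenberg as a $2N\times 2N$ block matrix) and the convention $Z_{2N,2N+1}=I_M$, packages into the first-order systems
\[
F' = Z F + e_{2N}\, f^{[2N]}, \qquad G' = Z G + e_{2N}\, g^{[2N]},
\]
where $e_{2N}$ is the $2N$-block column with $I_M$ in the last slot and $0_M$ elsewhere.

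Differentiating $[f,g]_Z$ and substituting the two systems produces
\[
[f,g]_Z' = (-1)^{N-1}\Bigl[G^*\bigl(Z^* J_{M,2N} + J_{M,2N} Z\bigr)F + (g^{[2N]})^* e_{2N}^* J_{M,2N}\, F + G^* J_{M,2N}\, e_{2N}\, f^{[2N]}\Bigr].
\]
A short calculation from \eqref{2.1} shows $J_{M,2N}^2 = -I_{2NM}$, so (A3) is equivalent to the anti-commutation $J_{M,2N} Z = -Z^* J_{M,2N}$, which annihilates the bulk term $G^*(Z^* J_{M,2N} + J_{M,2N} Z)F$. Reading the last row and last column of $J_{M,2N}$ off from \eqref{2.1} gives $e_{2N}^* J_{M,2N} = e_1^*$ and $J_{M,2N} e_{2N} = -e_1$, so the surviving boundary contributions equal $(g^{[2N]})^* f$ and $-g^* f^{[2N]}$. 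Multiplying by the prefactor $(-1)^{N-1}$ recovers precisely $(-1)^N\bigl[g^* f^{[2N]} - (g^{[2N]})^* f\bigr]$, as required.

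The main---and really only---obstacle is sign and index bookkeeping: verifying the prefactor $(-1)^{N-1}$ in the matrix form of the Lagrange bracket, the involution $J_{M,2N}^2 = -I_{2NM}$, and the two signs read off from $e_{2N}^* J_{M,2N}$ and $J_{M,2N} e_{2N}$. Once these are pinned down, hypothesis (A3)---recast as an anti-commutation with $J_{M,2N}$---automatically cancels the interior contribution, and the Lagrange identity drops out of the two boundary terms. The regularity claim $[f,g]_Z \in \AC([a,b])^M$ follows from the same substitution, since each summand on the right-hand side is a product of a locally integrable function with an absolutely continuous one.
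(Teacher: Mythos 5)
Your proof is correct. Note first that the paper does not actually prove this lemma; it is imported verbatim from M\"oller--Zettl \cite[Lemma 3.3]{MZ95a}, so there is no in-paper argument to compare against. What you have written is a sound, self-contained derivation along the standard ``first-order system'' lines: I checked the key bookkeeping and it all holds up --- the recursion \eqref{2.3} together with (A2) and $Z_{2N,2N+1}=I_M$ does give $F'=ZF+e_{2N}f^{[2N]}$ a.e.; the index match against \eqref{2.1} and \eqref{2.5} yields $[f,g]_Z=(-1)^{N-1}G^*J_{M,2N}F$; $J_{M,2N}^2=-I_{2NM}$ so (A3) is indeed equivalent to $J_{M,2N}Z=-Z^*J_{M,2N}$, killing the bulk term; and $e_{2N}^*J_{M,2N}=e_1^*$, $J_{M,2N}e_{2N}=-e_1$ produce exactly $(-1)^N\bigl[g^*f^{[2N]}-(g^{[2N]})^*f\bigr]$, which matches the left-hand side once $W^*=W$ is used. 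Two small remarks: the left-hand reduction needs $(W^{-1})^*W=I_M$, i.e.\ the Hermiticity of $W$, which you invoke correctly but should state as the reason $(\tz g)^*Wf=(-1)^N(g^{[2N]})^*f$; and your closing sentence about absolute continuity is slightly misstated --- each summand of $[f,g]_Z$ is a product of two $\AC$ functions (all quasi-derivatives up to order $2N-1$ are $\AC$ by definition of $\mathfrak{D}_Z^{[2N]}([a,b])$), which is what makes $[f,g]_Z$ absolutely continuous; ``locally integrable times absolutely continuous'' would not suffice. Neither point affects the validity of the argument.
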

For later use, it is convenient to extend the definition of the Lagrange bracket to matrix-valued functions of the form $F:[a,b]\to\bbC^{M\times M}$ whose columns $F_1,\ldots,F_M$ are in $\mathfrak{D}_Z^{[2N]}([a,b])$.  For such $F$, we define
\begin{equation}\lb{2.7a}
        F^{[\ell]}:=\Big(\,
            F_1^{[\ell]}\,\Big|\,F_2^{[\ell]}\,\Big|\,\cdots\,\Big|\, F_M^{[\ell]}\,\Big),\quad 0\leq \ell\leq 2N,
\end{equation}
and
\begin{equation}\lb{2.7b}
    \tau_{{}_Z}F:=(-1)^NW^{-1}F^{[2N]}.
\end{equation}
The definitions in \eqref{2.7a} and \eqref{2.7b} imply $F^{[\ell]}$, $0\leq \ell\leq 2N$, is an $M\times M$ matrix-valued function and
\begin{equation}\lb{2.9c}
    \tau_{{}_Z}F=\big(\,\tau_{{}_Z}F_1\,\big|\,\tau_{{}_Z}F_2\,\big|\,\cdots\,\big|\,\tau_{{}_Z}F_M\,\big).
\end{equation}
If $F,G:[a,b]\to\bbC^{M\times M}$ are functions whose columns $F_1,\ldots,F_M$ and $G_1,\ldots,G_M$, respectively, are in $\mathfrak{D}_Z^{[2N]}([a,b])$, then we define their Lagrange bracket by
\begin{equation}\lb{2.7c}
    [F,G]_Z:=(-1)^N\sum_{j=0}^{2N-1}(-1)^{1-j}\left(G^{[2N-j-1]}\right)^*F^{[j]},
\end{equation}
and obtain a Lagrange identity similar to Lemma \ref{l2.2}.
\begin{lemma}\lb{l2.3}
    Assume Hypothesis \ref{h2.1}.  If $F,G:[a,b] \to \bbC^{M\times M}$ are functions whose columns $F_1,\ldots,F_M$ and $G_1,\ldots,G_M$, respectively, are in $\mathcal{D}_Z^{[2N]}([a,b])$, then
    \begin{equation}
      G^*W(\tau_{{}_Z}F)-\left(\tau_{{}_Z}G\right)^*WF=[F,G]_Z'.
    \end{equation}
\end{lemma}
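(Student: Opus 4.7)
The plan is to reduce the matrix-valued identity to the already-established vector-valued Lagrange identity of Lemma \ref{l2.2} by a direct entrywise comparison. First I would unpack the columnwise definitions in \eqref{2.7a}--\eqref{2.9c}: since $\tau_{{}_Z}F = \big(\tau_{{}_Z}F_1\,\big|\,\cdots\,\big|\,\tau_{{}_Z}F_M\big)$ and $\tau_{{}_Z}G = \big(\tau_{{}_Z}G_1\,\big|\,\cdots\,\big|\,\tau_{{}_Z}G_M\big)$, the $(j,k)$-entry of $G^{*}W(\tau_{{}_Z}F)$ is $G_j^{*}W(\tau_{{}_Z}F_k)$, and the $(j,k)$-entry of $(\tau_{{}_Z}G)^{*}WF$ is $(\tau_{{}_Z}G_j)^{*}WF_k$.

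The crucial point is the entrywise reading of the matrix Lagrange bracket \eqref{2.7c}. For each $\ell$, the $j$-th row of $\big(G^{[2N-\ell-1]}\big)^{*}$ is the Hermitian conjugate of the $j$-th column $G_j^{[2N-\ell-1]}$ of $G^{[2N-\ell-1]}$, while the $k$-th column of $F^{[\ell]}$ is $F_k^{[\ell]}$; thus the $(j,k)$-entry of $\big(G^{[2N-\ell-1]}\big)^{*}F^{[\ell]}$ is $\big(G_j^{[2N-\ell-1]}\big)^{*}F_k^{[\ell]}$. Summing over $\ell$ from $0$ to $2N-1$ with the coefficients $(-1)^{N}(-1)^{1-\ell}$ shows that the $(j,k)$-entry of $[F,G]_Z$ equals $[F_k,G_j]_Z$ in the sense of \eqref{2.5}.

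Combining these observations, the $(j,k)$-entry of the claimed identity becomes
\begin{equation*}
G_j^{*}W(\tau_{{}_Z}F_k) - (\tau_{{}_Z}G_j)^{*}WF_k = [F_k,G_j]_Z',
\end{equation*}
which is exactly Lemma \ref{l2.2} applied to the pair $F_k, G_j \in \mathfrak{D}_Z^{[2N]}([a,b])$. Since componentwise differentiation commutes with extracting matrix entries, the matrix equation holds a.e.~on $[a,b]$, proving the lemma.

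The proof is essentially a bookkeeping argument — no new analytic content is needed beyond Lemma \ref{l2.2}. The only step requiring genuine attention is the identification of the $(j,k)$-entry of $[F,G]_Z$ with $[F_k,G_j]_Z$ (note the \emph{swap} of indices caused by the Hermitian conjugate on $G$), and I expect that to be the sole place where careful index tracking is needed.
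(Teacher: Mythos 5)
Your proposal is correct and follows essentially the same route as the paper: the paper likewise identifies the $(j,k)$-entry of $[F,G]_Z$ with $[F_k,G_j]_Z$ (the same index swap you flag), differentiates componentwise, and invokes Lemma \ref{l2.2} entrywise together with \eqref{2.7b} and \eqref{2.9c}. No gaps.
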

\begin{proof}
    By \eqref{2.7c},
    \begin{align}
        [F,G]_Z
            &\no =(-1)^N\sum_{\ell=0}^{2N-1}(-1)^{1-\ell}\left(\left(G_j^{[2N-\ell-1]}\right)^*F_k^{[\ell]}\right)_{j,k=1}^M\\
            &\no =\left((-1)^N\sum_{\ell=0}^{2N-1}(-1)^{1-\ell}\left(G_j^{[2N-\ell-1]}\right)^*F_k^{[\ell]}\right)_{j,k=1}^M\\
            &=\big([F_k,G_j]_Z\big)_{j,k=1}^M. \lb{2.11a}
    \end{align}
    Therefore, taking the derivative componentwise, we compute:
    \begin{align}
        [F,G]_Z'&\no =\left([F_k,G_j]_Z'\right)_{j,k=1}^M\\
            &\no =\left(G_j^*W(\tau_{{}_Z} F_k)-\left(\tau_{{}_Z} G_j\right)^*WF_k\right)_{j,k=1}^M\\
            &=G^*W(\tau_{{}_Z} F) - (\tau_{{}_Z} G)^*WF\lb{2.11b}
    \end{align}
    by linearity, Lemma \ref{l2.2}, \eqref{2.7b}, and \eqref{2.9c}.
\end{proof}
\medskip
In order to define the maximal and minimal operators associated to $\tau_{{}_Z}$, we introduce the Hilbert space $\Lp^2_W([a,b])$ of all (equivalence classes of) $f\in \Lp([a,b])^M$ for which $f^*Wf\in \Lp^1([a,b])$ equipped with the inner product
\begin{equation}\lb{2.7}
\langle f,g\rangle_W:= \int_{[a,b]} g^*Wf,\quad f,g\in \Lp^2_W([a,b]).
\end{equation}
We shall denote the identity operator on $\Lp^2_W([a,b])$ by $I_W$.
 
The {\it maximal operator} $H_{Z,\max}$ associated to $\tau_{{}_Z}$ is defined by
\begin{align}
&H_{Z,\max}f = \tau_{{}_Z}f,\lb{2.8}\\
&f\in \dom(H_{Z,\max})=\big\{y\in \Lp_W^2([a,b])\,\big|\, y\in \mathfrak{D}_Z^{[2N]}([a,b]),\, \tau_{{}_Z}y\in \Lp_W^2([a,b])\big\},\no
\end{align}
and the {\it minimal operator} $H_{Z,\min}$ associated to $\tau_{{}_Z}$ is defined by
\begin{align}\lb{2.9}
&H_{Z,\min}f = \tau_{{}_Z}f,\\
&f\in \dom(H_{Z,\min})=\big\{y\in \dom(H_{Z,\max})\,\big|\, y^{[j-1]}(a)=y^{[j-1]}(b)=0,\, 1\leq j\leq 2N\big\}.\no
\end{align}
One can show (see, e.g., \cite[Theorem 4.2]{MZ95a}) that $H_{Z,\max}$ and $H_{Z,\min}$ are densely defined and satisfy the following adjoint relations:
\begin{equation}\lb{2.17}
(H_{Z,\min})^*=H_{Z,\max}\quad \text{and}\quad (H_{Z,\max})^*=H_{Z,\min}.
\end{equation}
The equalities in \eqref{2.17} imply that $H_{Z,\max}$ and $H_{Z,\min}$ are closed.  Moreover, an elementary calculation using the Lagrange identity and the boundary conditions for functions in $\dom(H_{Z,\min})$ reveals that $H_{Z,\min}$ is symmetric.  Since $\tau_{{}_Z}$ is regular on $[a,b]$, the deficiency indices of $H_{Z,\min}$ satisfy (see \cite[Equation (2.2)]{MZ95b})
\begin{equation}\lb{2.11}
\dim(\ker((H_{Z,\min})^*\pm iI_W)) = \dim(\ker(H_{Z,\max}\pm iI_W)) = 2MN.
\end{equation}
Hence, $H_{Z,\min}$ has a self-adjoint extension. If $H$ is a self-adjoint extension of $\Hz{\min}$, then \eqref{2.17} implies
\begin{equation}
    \Hz{\min} \subseteq H \subseteq \Hz{\max},
\end{equation}
so $H$ is a self-adjoint restriction of $\Hz{\max}$. Thus, the action of a self-adjoint extension of $\Hz{\min}$ coincides with the action of $\Hz{\max}$. As a consequence, a self-adjoint extension of $\Hz{\min}$ is determined uniquely by its domain.
 
For $y\in\dom(H_{Z,\max})$, we introduce the notation:
              \begin{equation}\lb{2.20Y}
              Y(x)=\left(\begin{array}{c}
                           y^{[0]}(x)\\
                           y^{[1]}(x)\\
                           \vdots\\
                           y^{[2N-1]}(x)
              \end{array}\right),\quad x\in\{a,b\}.
              \end{equation}
The following theorem, which is a special case of \cite[Theorem 2.4]{MZ95b}, permits one to construct self-adjoint extensions of $H_{Z,\min}$ by imposing boundary conditions at the endpoints of $[a,b]$.
\begin{theorem}[{\cite[Theorem 2.4]{MZ95b}}]\lb{T2.3}
              Assume Hypothesis \ref{h2.1} and suppose that $A,B\in \big[\bbC^{M\times M}\big]^{2N\times 2N}$.  The operator $H_{Z,A,B}$ defined by
              \begin{equation}\lb{2.13}
                           \begin{split}
                                         &H_{Z,A,B}f=H_{Z,\max}f,\\
                                 &f\in\dom(H_{Z,A,B})=\{y\in\dom(H_{Z,\max})\,|\, AY(a)=BY(b)\},
                           \end{split}
              \end{equation}
is a self-adjoint extension of $H_{Z,\min}$ if and only if
              \begin{equation}\lb{2.14}
                            \rank(A\,|\,B)=2MN\quad\text{and}\quad AJ_{M,2N}A^*=BJ_{M,2N}B^*,
              \end{equation}
where $(A\,|\,B)$ is viewed as an element of $\bbC^{2MN\times 4MN}$.
\end{theorem}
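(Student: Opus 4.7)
The plan is to use the Lagrange identity (Lemma \ref{l2.2}) to convert self-adjointness of $H_{Z,A,B}$ into a linear-algebraic statement about a skew-Hermitian form on $\bbC^{4MN}$, then translate it into the matrix conditions in \eqref{2.14}. Integrating Lemma \ref{l2.2} yields, for $f,g\in\dom(H_{Z,\max})$,
\begin{equation*}
\langle H_{Z,\max} f, g\rangle_W - \langle f, H_{Z,\max} g\rangle_W = [f,g]_Z(b) - [f,g]_Z(a),
\end{equation*}
and a direct computation from \eqref{2.1} and \eqref{2.5} rewrites the right-hand side as $\mathcal{G}^*\mathcal{K}\mathcal{F}$, where $\mathcal{F}:=\binom{F(a)}{F(b)}$, $\mathcal{G}:=\binom{G(a)}{G(b)}\in\bbC^{4MN}$, and $\mathcal{K}:=(-1)^{N+1}\,\mathrm{diag}(-J_{M,2N},J_{M,2N})$. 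One checks from \eqref{2.1} that $J_{M,2N}$ is skew-Hermitian with $J_{M,2N}^2=-I_{2MN}$, so $\mathcal{K}$ is skew-Hermitian, invertible, and $\mathcal{K}^2=-I_{4MN}$.

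Next I would verify that the boundary trace $\Phi:\dom(H_{Z,\max})\to\bbC^{4MN}$, $f\mapsto\mathcal{F}$, has kernel $\dom(H_{Z,\min})$ (immediate from the definition of $\dom(H_{Z,\min})$ in \eqref{2.9}) and is surjective. Surjectivity is achieved by solving initial-value problems for the first-order Shin--Zettl system at $a$ and $b$ separately and patching with a cutoff; consistency with the deficiency-index count \eqref{2.11} gives $\dim(\dom(H_{Z,\max})/\dom(H_{Z,\min}))=4MN$. Via $\Phi$, the image of $\dom(H_{Z,A,B})$ is $V:=\ker(A\,|-B)\subseteq\bbC^{4MN}$. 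Since $H_{Z,\min}\subseteq H_{Z,A,B}\subseteq H_{Z,\max}$, \eqref{2.17} puts $(H_{Z,A,B})^*$ also between $H_{Z,\min}$ and $H_{Z,\max}$, so by the Green-type identity above, $g\in\dom((H_{Z,A,B})^*)$ iff $\mathcal{G}^*\mathcal{K}\mathcal{F}=0$ for all $f\in\dom(H_{Z,A,B})$.

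Hence $H_{Z,A,B}$ is self-adjoint if and only if $V$ is a Lagrangian subspace of $(\bbC^{4MN},\mathcal{K})$. The dimension part of Lagrangian-ness, $\dim V=2MN$, reads $\rank(A\,|-B)=2MN$, equivalently $\rank(A\,|\,B)=2MN$. The isotropy part, $\mathcal{K}V\subseteq V^\perp=\ran((A\,|-B)^*)$, is equivalent, under the rank condition, to $(A\,|-B)\mathcal{K}(A\,|-B)^*=0$: indeed, this matrix identity directly expresses isotropy of $V^\perp$, and since $\mathcal{K}^2=-I$ and $\dim V=\dim V^\perp=2MN$, isotropy of $V$ and of $V^\perp$ coincide. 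Expanding the block product finally yields $-AJ_{M,2N}A^*+BJ_{M,2N}B^*=0$, i.e., $AJ_{M,2N}A^*=BJ_{M,2N}B^*$, matching \eqref{2.14}.

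The main obstacle is the last linear-algebra step: verifying that, under the rank condition $\rank(A\,|\,B)=2MN$, the matrix identity $AJ_{M,2N}A^*=BJ_{M,2N}B^*$ is fully equivalent to Lagrangian-ness of $V$. A secondary technical point is the surjectivity of $\Phi$, standard but requiring explicit construction of maximal-domain functions with prescribed boundary data via existence theory for the first-order Shin--Zettl system combined with a cutoff.
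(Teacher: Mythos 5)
Your proposal is correct in outline, but note that the paper does not prove this statement at all: Theorem \ref{T2.3} is quoted verbatim from M\"oller--Zettl \cite[Theorem 2.4]{MZ95b}, so there is no internal proof to compare against. What you have written is the standard Glazman--Krein--Naimark argument that underlies that reference, and the details check out. In particular, $[f,g]_Z(x)=(-1)^{N+1}G(x)^*J_{M,2N}F(x)$ follows by matching indices in \eqref{2.5} against \eqref{2.1}, which justifies your form $\mathcal{K}=(-1)^{N+1}\mathrm{diag}(-J_{M,2N},J_{M,2N})$; the identity $J_{M,2N}^*=-J_{M,2N}$ and $J_{M,2N}^2=-I_{2MN}$ is a one-line computation; and the two points you flag as obstacles are genuinely resolvable exactly as you sketch. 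For the linear-algebra step: with $C=(A\,|-B)$ and $V=\ker C$, the identity $C\mathcal{K}C^*=0$ says $\mathcal{K}\,\ran(C^*)=\mathcal{K}V^\perp\subseteq V$, and under $\rank C=2MN$ both $V$ and $V^\perp$ have dimension $2MN$, so invertibility of $\mathcal{K}$ forces $\mathcal{K}V^\perp=V$, whence $V^\perp=-\mathcal{K}V=\mathcal{K}V$ as subspaces and $V=(\mathcal{K}V)^\perp$; the converse reverses these steps, so the equivalence you worried about does hold. The surjectivity of the trace map $\Phi$ is likewise standard (solve the first-order Shin--Zettl system near each endpoint with prescribed data and patch; \cite[Proposition 2.4]{MZ95a}, which the paper invokes elsewhere, supplies the needed existence and uniqueness), and it is consistent with the codimension count $4MN$ from \eqref{2.11}.
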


Recall that $\Hz{\min}$ is said to be {\it bounded from below} if there exists $\kappa \in \bbR$ such that
\begin{equation}
    \langle f, \Hz{\min} f\rangle_W \geq
        \kappa \langle f, f\rangle_W, \quad f \in \dom(\Hz{\min}).
\end{equation}
Define the map $\Gamma: \dom(H_{Z,\max})\to (\bbC^M)^{2N}$ by
\begin{equation}
\Gamma u =\begin{pmatrix}
              u^{[0]}(a)\\
              u^{[1]}(a)\\
              \vdots\\
              u^{[N-1]}(a)\\
              u^{[0]}(b)\\
              u^{[1]}(b)\\
              \vdots\\
              u^{[N-1]}(b)
\end{pmatrix}, \quad u\in \dom(H_{Z,\max}). \lb{3.3}
\end{equation}
The reason for introducing the map $\Gamma$ is that $H_{Z,\min}$ is bounded from below when $Z_{N,N+1}$ is positive definite a.e.~on $[a,b]$, and the domain of its Friedrichs extension coincides with $\ker(\Gamma)$. This result is due to M\"{o}ller and Zettl \cite{MZ95a}.
\begin{theorem}[{\cite[Theorem 8.1]{MZ95a}}]\lb{t2.5}
Assume Hypothesis \ref{h2.1}.  If $Z_{N,N+1}$ is positive definite a.e.~on $[a,b]$, then $H_{Z,\min}$ is bounded from below, and the domain of the Friedrichs extension $H_{Z,\rm F}$ of $H_{Z,\min}$ is
\begin{equation}\lb{3.4}
              \begin{split}
                           \dom(H_{Z,\rm F})&=\ker(\Gamma)\\
                           &=\big\{y\in \dom(H_{Z,\max})\,\big|\, y^{[j-1]}(a)=y^{[j-1]}(b)=0,\, 1\leq j\leq N\big\}.
              \end{split}
\end{equation}
\end{theorem}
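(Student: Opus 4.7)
The plan is to realize $H_{Z,\rm F}$ through the standard Friedrichs construction: starting from the quadratic form $q_0(f,g):=\langle H_{Z,\min} f, g\rangle_W$ on $\dom(H_{Z,\min})$, one shows $q_0$ is semibounded and closable, identifies the form domain $\mathcal{Q}$ of its closure, and then uses the fact that $\dom(H_{Z,\rm F})=\dom(H_{Z,\max})\cap \mathcal{Q}$.

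First, for $f\in\dom(H_{Z,\min})$, I would rewrite $\langle f, H_{Z,\min} f\rangle_W=\int_{[a,b]}f^*W\tau_{{}_Z}f=(-1)^N\int_{[a,b]}f^*f^{[2N]}$ using the definition \eqref{2.4}, then iterate the Lagrange identity (Lemma \ref{l2.2}) or, equivalently, integrate by parts $N$ times using the recursion \eqref{2.3}. Because $f^{[j-1]}(a)=f^{[j-1]}(b)=0$ for all $1\le j\le 2N$, every boundary term vanishes, and a careful bookkeeping (tracking how the off-diagonal coefficients $Z_{j,k}$ with $k\le j$ contribute through the recursion) should yield an identity of the schematic form
\begin{equation*}
\langle f, H_{Z,\min} f\rangle_W = \int_{[a,b]}\bigl(f^{[N]}\bigr)^*Z_{N,N+1}\,f^{[N]}\,dx \; + \; \sum_{j,k=0}^{N-1}\int_{[a,b]}\bigl(f^{[j]}\bigr)^*B_{j,k}\,f^{[k]}\,dx,
\end{equation*}
with $B_{j,k}\in \Lp^1([a,b])^{M\times M}$ built from the $Z_{j,k}$'s. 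Positive definiteness of $Z_{N,N+1}$ a.e.\ gives a pointwise lower bound on the leading integrand, while the lower-order integrals can be estimated by standard interpolation/Poincar\'e inequalities (exploiting $f^{[j-1]}(a)=0$ to control $\|f^{[j-1]}\|$ by $\|f^{[N]}\|$ for $1\le j\le N$). This yields semiboundedness of $H_{Z,\min}$ and proves that $q_0$ is a closable symmetric form.

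Next, to pin down the form domain $\mathcal{Q}$ of the closure $\overline{q_0}$, I would introduce the candidate space
\begin{equation*}
\mathcal{Q}_\Gamma:=\bigl\{f\in\mathfrak{D}_Z^{[N]}([a,b])\cap \Lp_W^2([a,b])\,\big|\,f^{[N]}\in \Lp^2_{Z_{N,N+1}}([a,b]),\ f^{[j-1]}(a)=f^{[j-1]}(b)=0,\ 1\le j\le N\bigr\}
\end{equation*}
equipped with the form norm. One direction, $\dom(H_{Z,\min})\subseteq \mathcal{Q}_\Gamma$, is immediate; the reverse inclusion $\mathcal{Q}_\Gamma\subseteq \overline{\dom(H_{Z,\min})}^{\,\overline{q_0}}$ requires an approximation argument, constructing for $f\in\mathcal{Q}_\Gamma$ a sequence $f_n\in\dom(H_{Z,\min})$ with $f_n\to f$ in the form norm. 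This is achieved by first cutting off $f$ near the endpoints so that additional quasi-derivatives $f^{[N]},\ldots,f^{[2N-1]}$ can be made to vanish on collars, then mollifying. The closedness of $\mathcal{Q}_\Gamma$ under the form norm follows from trace-type estimates: convergence in the form norm controls $f^{[N]}$ in $L^2$, hence $f^{[j-1]}$ uniformly for $0\le j-1\le N-1$, so the endpoint values $f^{[j-1]}(a), f^{[j-1]}(b)$ pass to the limit. Therefore $\mathcal{Q}=\mathcal{Q}_\Gamma$.

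Finally, by the representation theorem for semibounded forms, $H_{Z,\rm F}$ is the unique self-adjoint extension with $\dom(H_{Z,\rm F})\subseteq \mathcal{Q}$; combined with $H_{Z,\rm F}\subseteq H_{Z,\max}$ this gives $\dom(H_{Z,\rm F})=\dom(H_{Z,\max})\cap\mathcal{Q}=\ker(\Gamma)$, proving \eqref{3.4}. The main obstacle I anticipate is the iterated integration by parts for a general Shin--Zettl expression: because of the lower-triangular coupling coefficients $Z_{j,k}$ for $k\le j$ and the matrix-valued, nonselfadjoint nature of the individual blocks (selfadjointness is only enforced globally through (A3) in Hypothesis \ref{h2.1}), the intermediate terms do not group as cleanly as in the scalar or Naimark case, and verifying that the cross terms reassemble into a manifestly symmetric quadratic form with the claimed leading term $\int (f^{[N]})^*Z_{N,N+1}f^{[N]}$ is the delicate bookkeeping step on which everything else rests.
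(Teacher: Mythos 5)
First, context: the paper does not prove Theorem \ref{t2.5} at all --- it is quoted from M\"oller and Zettl \cite[Theorem 8.1]{MZ95a} and used as an input to the main results, so there is no in-paper argument to compare yours against. Your overall road map (Dirichlet-type formula by iterated integration by parts, semiboundedness, identification of the form domain, then $\dom(H_{Z,\rm F})=\dom(H_{Z,\max})\cap\mathcal{Q}$) is the correct one and is essentially the route of the original proof. The problem is that the two steps you defer are precisely where the content of the theorem lies, and the techniques you name for them do not apply in this setting.

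Concretely: (i) For semiboundedness, the coefficients of the quadratic form (and the weight $W$) are merely in $\Lp^1([a,b])^{M\times M}$, and $Z_{N,N+1}$ is only positive definite a.e.\ with no uniform lower bound. Consequently $\int (f^{[j]})^*B_{j,k}f^{[k]}$ cannot be estimated by $\Lp^2$-norms of the quasi-derivatives, and ``standard interpolation/Poincar\'e inequalities'' do not control $\|f^{[j-1]}\|$ by $\|f^{[N]}\|$; the leading term only controls $f^{[N]}$ in the weighted space $\Lp^2_{Z_{N,N+1}}$. What is actually needed is a sup-norm bound on $f^{[0]},\dots,f^{[N-1]}$ obtained from the recursion \eqref{2.3} via the fundamental theorem of calculus and a Cauchy--Schwarz estimate against the weight $Z_{N,N+1}$, followed by an $\varepsilon$-absorption into the leading term; this weighted estimate is the technical heart of \cite{MZ95a} and is absent from your sketch. (ii) The density step $\mathcal{Q}_\Gamma\subseteq\overline{\dom(H_{Z,\min})}$ cannot be carried out by ``cutting off and mollifying'': convolution does not commute with quasi-differentiation, and for $\Lp^1$ coefficients the mollification of an element of $\mathfrak{D}_Z^{[N]}([a,b])$ need not lie in $\mathfrak{D}_Z^{[2N]}([a,b])$, let alone have $\tz$-image in $\Lp^2_W([a,b])$. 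One must instead work in the quasi-derivative coordinates (correcting boundary data with solutions of auxiliary initial value problems), or bypass the density argument entirely by verifying via Theorem \ref{T2.3} that $\ker(\Gamma)$ is the domain of a self-adjoint extension, showing that this domain is contained in the form domain, and invoking the uniqueness of the Friedrichs extension among self-adjoint extensions with that property. As written, your proposal is a plausible outline of the proof in \cite{MZ95a}, but the steps it leaves open are the ones that make the theorem nontrivial.
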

The characterization of $H_{Z,\rm F}$ given in Theorem \ref{t2.5} will play an important role in our characterization of the Krein--von Neumann extension in the next section.
 
\section{Main Results} \lb{s3}

In this section, we assume in addition to Hypothesis \ref{h2.1} that $Z_{N,N+1}$ is positive definite a.e.~on $[a,b]$ and that the minimal operator $H_{Z,\min}$ associated to $\tau_{{}_Z}$ is strictly positive:
\begin{equation}
\Hz{\min} \geq \varepsilon I_W \, \text{ for some $\varepsilon \in (0,\infty)$}.\lb{3.1}
\end{equation}
As a consequence of \eqref{3.1},
\begin{equation}\lb{3.2}
\begin{split}
\dim\big(\ker\big((H_{Z,\min})^*-zI_W\big)\big) = \dim\big(\ker(H_{Z,\max}-zI_W)\big) = 2MN,&\\
z\in \bbC\backslash [\varepsilon,\infty).&
\end{split}
\end{equation}
In particular, \eqref{3.2} implies that $\ker(H_{Z,\max})$ is a $2MN$-dimensional subspace of the Hilbert space $\Lp_W^2([a,b])$.  We shall characterize by boundary conditions the Krein--von Neumann extension $H_{Z,\rm K}$ of $H_{Z,\min}$ in terms of a specially chosen basis for $\ker(H_{Z,\max})$.  This basis is characterized in the following lemma.

\begin{lemma}\lb{L3.2}
              Assume Hypothesis \ref{h2.1} and suppose that $Z_{N,N+1}$ is positive definite a.e.~on $[a,b]$.  If \eqref{3.1} holds, then there exists a unique basis $\{\varphi_{j,k}\}_{j=1, k=1}^{2N, M}$ of $\ker(H_{Z,\max})$ such that
\begin{equation}\lb{3.5}
\Gamma\varphi_{j,k}=(\delta_{j,\ell}e_k)_{\ell=1}^{2N},
\end{equation}
where $\{e_k\}_{k=1}^M$ denotes the standard basis of $\bbC^M$.
\end{lemma}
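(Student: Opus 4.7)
The plan is to show that the restriction $\Gamma|_{\ker(H_{Z,\max})}$ is a linear isomorphism from $\ker(H_{Z,\max})$ onto $(\bbC^M)^{2N}$. Once this is established, the required $\varphi_{j,k}$ are uniquely defined as the preimages of the standard basis vectors $(\delta_{j,\ell}e_k)_{\ell=1}^{2N}$ of $(\bbC^M)^{2N}$, and since the image of a basis under a linear isomorphism is a basis, the $\varphi_{j,k}$ automatically form a basis of $\ker(H_{Z,\max})$.

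First I would settle the dimension count. The target $(\bbC^M)^{2N}$ has dimension $2MN$. On the other hand, the strict positivity \eqref{3.1} places $z=0$ in the set $\bbC\setminus[\varepsilon,\infty)$ on which the defect numbers are constant, so \eqref{3.2} applied at $z=0$ yields $\dim\ker(H_{Z,\max})=2MN$. Domain and target thus have equal, finite dimension, so injectivity of $\Gamma|_{\ker(H_{Z,\max})}$ will automatically upgrade to bijectivity.

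The heart of the argument is injectivity, and this is where Theorem \ref{t2.5} enters. Suppose $u\in\ker(H_{Z,\max})$ satisfies $\Gamma u=0$. Theorem \ref{t2.5} identifies $\ker(\Gamma)$ with $\dom(H_{Z,\rm F})$, so $u\in\dom(H_{Z,\rm F})$, and since $H_{Z,\rm F}\subseteq H_{Z,\max}$ acts as a restriction of $H_{Z,\max}$, we have $H_{Z,\rm F}u=H_{Z,\max}u=0$. Now invoke Theorem \ref{T1.1}: because $H_{Z,\min}\geq\varepsilon I_W$, the Friedrichs extension inherits the same lower bound, $H_{Z,\rm F}\geq\varepsilon I_W$. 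Hence
\begin{equation*}
0=\langle u,H_{Z,\rm F}u\rangle_W\geq\varepsilon\langle u,u\rangle_W,
\end{equation*}
which forces $u=0$. This establishes injectivity, and the proof concludes by the dimension argument above.

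I do not anticipate a substantive obstacle. The lemma is essentially a bookkeeping consequence of two nontrivial inputs already available in the excerpt, namely the M\"oller--Zettl description $\dom(H_{Z,\rm F})=\ker(\Gamma)$ and the constancy of defect numbers under strict positivity. The only point that requires care is verifying that $H_{Z,\rm F}u$ and $H_{Z,\max}u$ agree on $\dom(H_{Z,\rm F})$, which is immediate from $H_{Z,\min}\subseteq H_{Z,\rm F}\subseteq H_{Z,\max}$.
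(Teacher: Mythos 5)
Your proposal is correct and follows essentially the same route as the paper: injectivity of $\Gamma\big|_{\ker(H_{Z,\max})}$ via the M\"oller--Zettl identification $\dom(H_{Z,\rm F})=\ker(\Gamma)$ together with the strict positivity of $H_{Z,\rm F}$, surjectivity from the dimension count $\dim(\ker(H_{Z,\max}))=2MN$, and then the $\varphi_{j,k}$ as preimages of the standard basis. No gaps.
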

\begin{proof}
It suffices to show that  $\Gamma\big|_{\ker(H_{Z,\max})}$ is a bijection, for one may then verify that $\{\varphi_{j,k}\}_{j=1, k=1}^{2N, M}$ defined by
 
\begin{equation}\lb{3.6}
\varphi_{j,k} = \left(\Gamma\big|_{\ker(H_{Z,\max})}\right)^{-1} (\delta_{j,\ell}e_k)_{\ell=1}^{2N},\quad 1\leq j\leq 2N,\, 1\leq k\leq M,
\end{equation}
is a basis for $\ker(\Hz{\max})$ that fulfills \eqref{3.5}. To prove injectivity, suppose $y\in \ker(H_{Z,\max})$ and $\Gamma y=0$. By equation \eqref{3.4}, $y\in \dom(H_{Z,\rm F})$; thus, $H_{Z, \rm F} y = H_{Z, \max} y = 0$. Moreover, \eqref{3.1} implies $\Hz{\rm F} \geq \varepsilon I_W$ (cf.~Theorem \ref{T1.1}), so that
              \begin{equation}
       \varepsilon\langle y,y\rangle_W\leq\langle y,H_{Z, \rm F}y \rangle_W=0.
              \end{equation}
Thus, $y=0$. Therefore, $\Gamma\big|_{\ker(H_{Z,\max})}$ has a trivial kernel and is thus injective. Since
              \begin{equation}
              \dim(\ker(H_{Z,\max}))=2MN=\dim\big((\bbC^M)^{2N}\big),
              \end{equation}
$\Gamma\big|_{\ker(H_{Z,\max})}$ is also surjective. Therefore, we retrieve a basis $\{\varphi_{j,k}\}_{j=1, k=1}^{2N, M}$ by  \eqref{3.6}, and by injectivity of $\Gamma\big|_{\ker(H_{Z,\max})}$ we also conclude that it is the unique basis for $\ker(H_{Z,\max})$ that satisfies \eqref{3.5}.
\end{proof}
\begin{remark}\lb{r3.3}
The definition of $\Gamma$ in \eqref{3.3} implies for $1\leq j \leq 2N$ and $1 \leq k \leq M$,
\begin{equation}
\big( \Gamma \varphi_{j,k}\big)_{\ell}= \begin{cases}
    \varphi_{j,k}^{[\ell-1]}(a), & 1 \leq \ell \leq N,
    \\[2mm]  \varphi_{j,k}^{[\ell-N-1]}(b), & N+1 \leq \ell \leq 2N,
\end{cases}
\end{equation}
where the subscript $\ell$ on the left-hand side denotes the $\ell$th component of $\Gamma\varphi_{j,k}$.  Therefore, \eqref{3.5} yields for $1\leq j \leq 2N$, $1\leq k \leq M$, and $1 \leq \ell \leq N$,
\begin{align}
    \varphi_{j,k}^{[\ell-1]}(a)&=\delta_{j,\ell}e_k \quad \text{and} \quad
    \varphi_{j,k}^{[\ell-1]}(b)=\delta_{j, \ell+N}e_k.\lb{3.10}
\end{align}
\hfill $\diamond$
\end{remark}
Using the basis $\{\varphi_{j,k}\}_{j=1,k=1}^{2N,M}$ for $\ker(H_{Z,\max})$ prescribed in Lemma \ref{L3.2}, we will characterize the Krein--von Neumann extension $\Hz{\rm K}$ of $H_{Z,\min}$ by boundary conditions.  In fact, we shall prove that $H_{Z,\rm K}$ is of the form \eqref{2.13}.  That is, we shall show that every function $y\in \dom(H_{Z,\rm K})$ satisfies boundary conditions of the form
\begin{equation}
A_{\rm K}Y(a) = B_{\rm K}Y(b)
\end{equation}
for a pair of fixed ($y$-independent) matrices $A_{\rm K}, B_{\rm K}\in \big[\bbC^{M\times M}\big]^{2N\times 2N}$ which satisfy \eqref{2.14}.
 
To determine $A_{\rm K}$ and $B_{\rm K}$, we recall $\eqref{1.8}$, which now takes the form
\begin{equation}\lb{3.12}
    \dom(H_{Z, \rm K})=\dom(H_{Z,\min})\dotplus\ker{(H_{Z, \max})}.
\end{equation}
If $y\in \dom(H_{Z, \rm K})$, then by \eqref{3.12} there exist scalars $\{c_{j,k}\}_{j=1,k=1}^{2N,M} \subset \bbC$ and some $\psi\in \dom(H_{Z,\min})$ such that
\begin{equation}\lb{3.13}
    y=\psi+\sum_{j=1,k=1}^{2N,M}c_{j,k}\varphi_{j,k}.
\end{equation}
Therefore, since \eqref{2.9} implies
\begin{equation}\lb{3.14b}
    \psi^{[\ell-1]}(x) = 0, \quad 1 \leq \ell \leq 2N, \, x \in \{a,b\},
\end{equation}
we have
\begin{equation}\lb{3.14}
    y^{[\ell-1]}(x)=\sum_{j=1, k=1}^{2N,M} c_{j,k}\varphi_{j,k}^{[\ell-1]}(x),\quad 1\leq \ell \leq N,\, x\in \{a,b\}.
\end{equation}
Letting $x=a$ in \eqref{3.14} and applying \eqref{3.10}, we obtain for $1\leq \ell\leq N$:
\begin{equation}\lb{3.15}
    y^{[\ell-1]}(a)=\sum_{j=1,k=1}^{2N,M} c_{j,k}\varphi_{j,k}^{[\ell-1]}(a)=\sum_{j=1,k=1}^{2N,M} c_{j, k}\delta_{j,\ell}e_k=\sum_{k=1}^M c_{\ell, k}e_k.
\end{equation}
Therefore, taking the $k$th component throughout \eqref{3.15} yields
\begin{equation}\lb{3.16}
    c_{j,k}=\big(y^{[j-1]}(a)\big)_{k},\quad 1\leq j\leq N,\, 1\leq k\leq M,
\end{equation}
where the subscript $k$ on the right-hand side denotes the $k$th component of a vector in $\bbC^M$. Similarly, letting $x=b$ in \eqref{3.14} and applying \eqref{3.10}, we obtain for $1\leq \ell\leq N$:
\begin{equation}\lb{3.17}
    y^{[\ell-1]}(b)=\sum_{j=1,k=1}^{2N,M} c_{j,k}\varphi_{j,k}^{[\ell-1]}(b)
    =\sum_{j=1,k=1}^{2N,M} c_{j, k}\delta_{j,\ell+N}e_k
    =\sum_{k=1}^Mc_{\ell+N, k}e_k.
\end{equation}
Therefore, taking the $k$th component throughout \eqref{3.17} yields
\begin{equation}\lb{3.18}
c_{j,k}=\big(y^{[j-1-N]}(b)\big)_{k},\quad N+1\leq j\leq 2N,\, 1\leq k\leq M.
\end{equation}
Using \eqref{3.16} and \eqref{3.18}, \eqref{3.13} can be recast as
\begin{equation}\lb{3.19}
    y=\psi+\sum_{j=1,k=1}^{N,M}\big(y^{[j-1]}(a)\big)_{k}\varphi_{j,k}
    +\sum_{j=1,k=1}^{N,M}\big(y^{[j-1]}(b)\big)_{k}\varphi_{j+N,k}.
\end{equation}
In particular, \eqref{3.14b} and \eqref{3.19} imply
\begin{equation}\lb{3.20}
    \begin{split}
    y^{[\ell-1]}(x)=\sum_{j=1,k=1}^{N,M}\big(y^{[j-1]}(a)\big)_{k}\varphi_{j,k}^{[\ell-1]}(x)
    +\sum_{j=1,k=1}^{N,M}\big(y^{[j-1]}(b)\big)_{k}\varphi_{j+N,k}^{[\ell-1]}(x),&\\
    N+1\leq \ell\leq 2N,\, x\in \{a,b\}.&
    \end{split}
\end{equation}
Letting $x=a$ in \eqref{3.20} and rearranging, we obtain
\begin{equation}\lb{3.21}
    \begin{split}
    -\sum_{j=1,k=1}^{N,M}\big(y^{[j-1]}(a)\big)_k\varphi_{j,k}^{[\ell-1]}(a)+y^{[\ell-1]}(a)=\sum_{j=1,k=1}^{N,M}\big(y^{[j-1]}(b)\big)_k\varphi_{j+N,k}^{[\ell-1]}(a),&\\
    N+1\leq \ell\leq 2N.&
    \end{split}
\end{equation}
We introduce the following notation:
\begin{equation}\lb{3.22}
    \pm\varphi_j^{[\ell-1]}=\left( \begin{array}{c|c|c|c}
      \pm\varphi_{j,1}^{[\ell-1]} & \pm\varphi_{j,2}^{[\ell-1]} & \cdots & \pm\varphi_{j,M}^{[\ell-1]}
    \end{array}\right), \quad 1\leq j,\ell \leq 2N.
\end{equation}
We view $\pm\varphi_j^{[\ell-1]}$ as $M \times M$ matrices, instead of row vectors of column vectors, and remark that, by \eqref{3.10}, for $1\leq \ell \leq N$, $1\leq j \leq 2N$,
\begin{equation}\lb{3.9}
    \pm\varphi_j^{[\ell-1]}(a)=\pm\delta_{j,\ell}I_M
    \quad \text{and}\quad
    \pm\varphi_{j}^{[\ell-1]}(b)=\pm\delta_{j-N,\ell}I_M.
\end{equation}
Using the notation in \eqref{3.22}, equation \eqref{3.21} can be rewritten as
\begin{equation}\lb{3.23}
    \begin{split}
    -\sum_{j=1}^{N}\varphi_{j}^{[\ell-1]}(a)y^{[j-1]}(a)+y^{[\ell-1]}(a)=\sum_{j=1}^{N}\varphi_{j+N}^{[\ell-1]}(a)y^{[j-1]}(b),&\\
    N+1\leq \ell\leq 2N.&
    \end{split}
\end{equation}
In turn, equation \eqref{3.23} may be recast in terms of matrix products as follows (cf.~\eqref{2.20Y}):
\begin{align}\lb{3.24}
    &\left(\begin{array}{c|c|c|c|c|c|c|c}
     -\varphi_1^{[\ell-1]}(a) & \cdots & -\varphi_{N}^{[\ell-1]}(a) & 0_M & \cdots & I_M & \cdots & 0_M
\end{array}\right)Y(a)
\\
&\quad =\left(\begin{array}{c|c|c|c|c|c|c|c}
     \varphi_{N+1}^{[\ell-1]}(a) & \cdots & \varphi_{2N}^{[\ell-1]}(a) & 0_M  & \cdots & 0_M & \cdots & 0_M
\end{array}\right)Y(b),\no
\end{align}
where the $I_M$ on the left-hand side and the second $0_M$ on the right-hand side are positioned in the $\ell$th columns.
Similarly, letting $x=b$ in \eqref{3.20}, one obtains
\begin{align}\lb{3.25}
    &\left(\begin{array}{c|c|c|c|c|c|c|c}
     \varphi_1^{[\ell-1]}(b) & \cdots & \varphi_{N}^{[\ell-1]}(b) & 0_M & \cdots & 0_M & \cdots & 0_M
\end{array}\right)Y(a)
\\
&\quad =\left(\begin{array}{c|c|c|c|c|c|c|c}
     -\varphi_{N+1}^{[\ell-1]}(b) & \cdots & -\varphi_{2N}^{[\ell-1]}(b) & 0_M  & \cdots & I_M & \cdots & 0_M
\end{array}\right) Y(b),\no
\end{align}
where the second $0_M$ on the left-hand side and the $I_M$ on the right-hand side are positioned in the $\ell$th columns.
We may stack \eqref{3.24} and \eqref{3.25} for $N+1 \leq \ell \leq 2N$ into a single matrix equation
\begin{equation}\lb{3.27}
    A_{\rm K}Y(a)=B_{\rm K}Y(b),
\end{equation}
where $A_{\rm K}$ and $B_{\rm K}$, which are $2N\times 2N$ block matrices with $M\times M$ block components, are defined by
\begin{align}
    A_{\rm K}&=\left(\begin{array}{c c c c | c c c c}
        -\varphi_1^{[N]}(a) & -\varphi_2^{[N]}(a) & \cdots & -\varphi_{N}^{[N]}(a) & I_M & 0_M & \cdots & 0_M
        \\[1mm]
        -\varphi_1^{[N+1]}(a) & -\varphi_2^{[N+1]}(a) & \cdots & -\varphi_{N}^{[N+1]}(a) & 0_M & I_M & \cdots & 0_M
        \\
        \vdots & \vdots & \ddots & \vdots & \vdots & \vdots & \ddots& \vdots
        \\
        -\varphi_1^{[2N-1]}(a) & -\varphi_2^{[2N-1]}(a) & \cdots & -\varphi_{N}^{[2N-1]}(a) & 0_M & 0_M & \cdots & I_M
        \\[1mm]
        \hline\\[-3.5mm]
        \varphi_1^{[N]}(b) & \varphi_2^{[N]}(b) & \cdots & \varphi_{N}^{[N]}(b) & 0_M & 0_M & \cdots & 0_M
        \\[1mm]
        \varphi_1^{[N+1]}(b) &
        \varphi_2^{[N+1]}(b) & \cdots & \varphi_{N}^{[N+1]}(b) & 0_M & 0_M & \cdots & 0_M
        \\
        \vdots & \vdots & \ddots & \vdots & \vdots & \vdots & \ddots & \vdots
        \\
        \varphi_1^{[2N-1]}(b) &
        \varphi_2^{[2N-1]}(b) & \cdots & \varphi_{N}^{[2N-1]}(b) & 0_M & 0_M & \cdots & 0_M
    \end{array}\right),\no\\[3mm]
    B_{\rm K} &= \left(
        \begin{array}{c c c c | c c c c}
        \varphi_{N+1}^{[N]}(a) & \varphi_{N+2}^{[N]}(a) & \cdots & \varphi_{2N}^{[N]}(a) & 0_M & 0_M & \cdots & 0_M
        \\[1mm]
        \varphi_{N+1}^{[N+1]}(a) & \varphi_{N+2}^{[N+1]}(a) & \cdots & \varphi_{2N}^{[N+1]}(a) & 0_M & 0_M & \cdots & 0_M
        \\
        \vdots & \vdots & \ddots & \vdots & \vdots & \vdots & \ddots & \vdots
        \\
        \varphi_{N+1}^{[2N-1]}(a) & \varphi_{N+2}^{[2N-1]}(a) & \cdots & \varphi_{2N}^{[2N-1]}(a) & 0_M & 0_M & \cdots & 0_M
        \\[1mm]
        \hline\\[-3.5mm]
        -\varphi_{N+1}^{[N]}(b) & -\varphi_{N+2}^{[N]}(b) & \cdots & -\varphi_{2N}^{[N]}(b) & I_M & 0_M & \cdots & 0_M
        \\[1mm]
        -\varphi_{N+1}^{[N+1]}(b) &
        -\varphi_{N+2}^{[N+1]}(b) & \cdots & -\varphi_{2N}^{[N+1]}(b) & 0_M & I_M & \cdots & 0_M
        \\
        \vdots & \vdots & \ddots & \vdots & \vdots & \vdots & \ddots & \vdots
        \\
        -\varphi_{N+1}^{[2N-1]}(b) &
        -\varphi_{N+2}^{[2N-1]}(b) & \cdots & -\varphi_{2N}^{[2N-1]}(b) & 0_M & 0_M & \cdots & I_M
    \end{array} \right).\no
\end{align}
 
By introducing
\begin{equation}\lb{3.29}
    \Phi_X = \Big(\varphi_{k+X}^{[j+N-1]}\Big)_{j,k=1}^N, \quad X \in \{0,N\},
\end{equation}
the matrices $A_{\rm K}$ and $B_{\rm K}$ may be written in block form as
\begin{equation}\lb{3.30}
    A_{\rm K}=\left( \begin{array}{c|c}
        -\Phi_0(a) & I_{M,N} \\ \hline
        \Phi_0(b) & 0_{M,N}
    \end{array}\right)\quad\text{and}\quad
    B_{\rm K}=\left( \begin{array}{c|c}
         \Phi_N(a) & 0_{M,N} \\ \hline
        -\Phi_N(b) & I_{M,N}
    \end{array}\right),
\end{equation}
where $I_{M,N} = (\delta_{j,k}I_M)_{j,k=1}^N$ and $0_{M,N}$ is the $N \times N$ matrix in which all entries are $0_M$.  The main result of this paper may be stated as follows.

\begin{theorem}\lb{T3.4}
Assume Hypothesis \ref{h2.1} and suppose that $Z_{N,N+1}$ is positive definite a.e.~on $[a,b]$. If \eqref{3.1} holds and $\{\varphi_{j,k}\}_{j=1,k=1}^{2N,M}$ is the basis for $\ker(H_{Z,\max})$ defined by \eqref{3.5}, then the domain of the Krein--von Neumann extension $H_{Z,{\rm K}}$ of $H_{Z,\min}$ is given by
\begin{equation} \lb{3.31}
    \dom(H_{Z,{\rm K}}) = \{y\in \dom(H_{Z,\max})\,|\, A_{\rm K}Y(a) = B_{\rm K}Y(b)\},
\end{equation}
where $A_{\rm K}$ and $B_{\rm K}$ are defined by \eqref{3.29} and \eqref{3.30}.
\end{theorem}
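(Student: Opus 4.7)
The plan is to verify the two inclusions in \eqref{3.31} separately, relying throughout on the Krein--von Neumann decomposition \eqref{3.12}. The forward inclusion $\dom(H_{Z,{\rm K}}) \subseteq \{y \in \dom(H_{Z,\max}) : A_{\rm K} Y(a) = B_{\rm K} Y(b)\}$ is in fact already carried out in the text preceding the theorem: starting from an arbitrary $y \in \dom(H_{Z,{\rm K}})$, one uses \eqref{3.12} to write $y = \psi + \sum_{j,k} c_{j,k} \varphi_{j,k}$ with $\psi \in \dom(H_{Z,\min})$, pins down the coefficients via \eqref{3.16} and \eqref{3.18}, and exploits $\psi^{[\ell-1]}(a) = \psi^{[\ell-1]}(b) = 0$ for $N+1 \leq \ell \leq 2N$ to obtain \eqref{3.23}--\eqref{3.25}, whose packaged form is $A_{\rm K} Y(a) = B_{\rm K} Y(b)$ with $A_{\rm K}$, $B_{\rm K}$ as in \eqref{3.30}. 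I would present this as one clean paragraph citing those displayed equations.

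For the reverse inclusion, I would take $y \in \dom(H_{Z,\max})$ satisfying $A_{\rm K} Y(a) = B_{\rm K} Y(b)$, \emph{define} scalars $c_{j,k}$ by the formulas \eqref{3.16} and \eqref{3.18} (now as definitions, not deductions), and set $\psi := y - \sum_{j,k} c_{j,k} \varphi_{j,k}$. Since each $\varphi_{j,k}$ belongs to $\ker(H_{Z,\max}) \subseteq \dom(H_{Z,\max})$, one automatically has $\psi \in \dom(H_{Z,\max})$ and $\tau_{{}_Z} \psi = \tau_{{}_Z} y \in \Lp_W^2([a,b])$. Thus the only remaining task is to check the $2N$ pairs of boundary conditions $\psi^{[\ell-1]}(a) = \psi^{[\ell-1]}(b) = 0$ for $1 \leq \ell \leq 2N$; once these are in hand, $\psi \in \dom(H_{Z,\min})$ and therefore $y = \psi + \sum_{j,k} c_{j,k} \varphi_{j,k} \in \dom(H_{Z,\min}) \dotplus \ker(H_{Z,\max}) = \dom(H_{Z,{\rm K}})$ by \eqref{3.12}.

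The boundary vanishing splits into two regimes. For $1 \leq \ell \leq N$, only the index $j = \ell$ contributes in $\sum_{j,k} c_{j,k} \varphi_{j,k}^{[\ell-1]}(a)$ by the selection rule \eqref{3.10}, and it reproduces exactly $y^{[\ell-1]}(a)$ by the defining formula for $c_{\ell,k}$; analogously at $b$ only $j = \ell + N$ contributes and recovers $y^{[\ell-1]}(b)$. The substantive step is $N+1 \leq \ell \leq 2N$: expanding $\psi^{[\ell-1]}(a)$ with the matrix notation \eqref{3.22} reduces the identity $\psi^{[\ell-1]}(a) = 0$ precisely to $-\sum_{j=1}^N \varphi_{j}^{[\ell-1]}(a) y^{[j-1]}(a) + y^{[\ell-1]}(a) = \sum_{j=1}^N \varphi_{j+N}^{[\ell-1]}(a) y^{[j-1]}(b)$, which is exactly the $(\ell-N)$th block row of the upper half of $A_{\rm K} Y(a) = B_{\rm K} Y(b)$ in the form \eqref{3.30}; the parallel manipulation at $b$ yields the lower half. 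Since this is the reverse of the derivation that produced \eqref{3.24}--\eqref{3.25}, no new calculation is required, and the main obstacle is purely bookkeeping: tracking the simultaneous ``shift by $N$'' in the indexing of $\varphi_j$ and in the placement of the identity blocks in \eqref{3.30}.
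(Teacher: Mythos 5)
Your proposal is correct, but it takes a genuinely different route from the paper for the substantive half of the argument. The paper establishes only the forward inclusion by the computation preceding the theorem (exactly as you describe), and then handles the reverse inclusion \emph{abstractly}: it invokes Theorem \ref{T2.3} to show that the right-hand side of \eqref{3.31} is itself the domain of a self-adjoint extension of $H_{Z,\min}$, and concludes from the fact that a self-adjoint operator admits no proper self-adjoint extension. This requires verifying the M\"oller--Zettl criterion \eqref{2.14}, i.e.\ $\rank(A_{\rm K}\,|\,B_{\rm K})=2MN$ and $A_{\rm K}J_{M,2N}A_{\rm K}^*=B_{\rm K}J_{M,2N}B_{\rm K}^*$, and the bulk of the paper's proof is a Lagrange-bracket computation on the pairs $\varphi_k,\varphi_{N+1-j}$ and $\varphi_{N+k},\varphi_{N+1-j}$ establishing the identities \eqref{3.41}--\eqref{3.42}. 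You instead prove the reverse inclusion directly: given $y$ satisfying the boundary condition, you define the coefficients by \eqref{3.16} and \eqref{3.18}, set $\psi:=y-\sum_{j,k}c_{j,k}\varphi_{j,k}$, and check $\psi^{[\ell-1]}(a)=\psi^{[\ell-1]}(b)=0$ for $1\leq\ell\leq 2N$ --- the cases $1\leq\ell\leq N$ from the selection rules \eqref{3.10} and the cases $N+1\leq\ell\leq 2N$ from the block rows of $A_{\rm K}Y(a)=B_{\rm K}Y(b)$ --- so that $y\in\dom(H_{Z,\min})\dotplus\ker(H_{Z,\max})=\dom(H_{Z,\rm K})$ by \eqref{3.12}. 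This checks out and is noticeably shorter, since it uses \eqref{1.8} in both directions and avoids the Lagrange-identity computation entirely. What the paper's longer route buys is the structural fact that $(A_{\rm K},B_{\rm K})$ satisfy \eqref{2.14}, exhibiting the Krein--von Neumann boundary conditions explicitly in the canonical self-adjoint form of Theorem \ref{T2.3}; your argument yields \eqref{3.31} without ever confirming that identity.
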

\begin{proof}
The arguments in equations \eqref{3.12}--\eqref{3.30} imply
\begin{equation}\lb{3.32}
     \dom(H_{Z,{\rm K}}) \subseteq \{y\in \dom(H_{Z,\max})\,|\, A_{\rm K}Y(a) = B_{\rm K}Y(b)\}.
\end{equation}
Since the self-adjoint operator $\Hz{\rm K}$ does not have a proper self-adjoint extension, in order to establish \eqref{3.31}, it suffices to show that the set on the right-hand side of \eqref{3.32} is the domain of a self-adjoint extension of $H_{Z,\min}$. In turn, by Theorem \ref{T2.3}, it suffices to show that  $A_{\rm K}$ and $B_{\rm K}$ satisfy \eqref{2.14}.
 
In view of the $I_{M,N}$ identity blocks in \eqref{3.30}, it is clear that $\rank(A_{\rm K} \, | \, B_{\rm K}) = 2MN$. Thus, it remains to show
\begin{equation}\lb{3.33}
    A_{\rm K}J_{M,2N}A_{\rm K}^*=B_{\rm K}J_{M,2N}B_{\rm K}^*.
\end{equation}
Writing $J_{M,2N}$ in block form as in \eqref{3.30},
\begin{equation}\lb{3.34}
    J_{M,2N}=\left(\begin{array}{c|c}
        0_{M,N} & J_{M,N}\\[1mm]
        \hline\\[-3.5mm]
        (-1)^NJ_{M,N} &0_{M,N}\\
    \end{array}\right),
\end{equation}
where
\begin{equation}\lb{3.34a}
J_{M,N}=\big((-1)^j\delta_{j,N+1-k} I_M\big)_{j,k=1}^{N}\in \big[\bbC^{M\times M}\big]^{N\times N},
\end{equation}
we obtain
\begin{align}
    A_{\rm K}J_{M,2N}A_{\rm K}^*&=\left(\begin{array}{c|c}
         (-1)^{N+1}J_{M,N}\Phi_0(a)^*-\Phi_0(a)J_{M,N}& (-1)^N J_{M,N}\Phi_0(b)^* \\[1mm]
         \hline\\[-3.5mm]
         \Phi_0(b)J_{M,N}& 0_{M,N}
    \end{array}\right), \lb{3.35}\\
    B_{\rm K}J_{M,2N}B_{\rm K}^*&=\left(\begin{array}{c|c}
         0_{M,N}& \Phi_N(a)J_{M,N} \\[1mm]
         \hline\\[-3.5mm]
         (-1)^NJ_{M,N}\Phi_N(a)^* & (-1)^{N+1}J_{M,N}\Phi_{N}(b)^*-\Phi_N(b)J_{M,N}
    \end{array}\right).\lb{3.36}\end{align}
In order to prove \eqref{3.33}, it suffices to prove equality between each of the respective block components on the right-hand sides in \eqref{3.35} and \eqref{3.36}. We show the equalities for the $(1,1)$ and $(1,2)$ block components; that is, we prove
\begin{align}
     0_{M,N}&=(-1)^{N+1}J_{M,N}\Phi_0(a)^*-\Phi_0(a)J_{M,N},\lb{3.37}\\
    \Phi_N(a)J_{M,N}&=(-1)^N J_{M,N}\Phi_0(b)^*.\lb{3.38}
\end{align}
The remaining two equalities for the $(2,1)$ and $(2,2)$ block components can be shown in an entirely analogous manner.  Using the elementary relation $J_{M,N}^{-1}=(-1)^{N+1}J_{M,N}$, we isolate $\Phi_0(a)$ and $\Phi_N(a)$ in \eqref{3.37} and \eqref{3.38}, respectively, and find that \eqref{3.37} and \eqref{3.38} are equivalent to
\begin{align}
   \Phi_0(a)&=J_{M,N}\Phi_0(a)^*J_{M,N},\lb{3.39}\\
    \Phi_N(a) &= -J_{M,N}\Phi_0(b)^*J_{M,N}.\lb{3.40}
\end{align}
Entrywise, \eqref{3.39} and \eqref{3.40} reduce to showing that, for $1 \leq j,k \leq N$,
\begin{align}
    \varphi_k^{[N+j-1]}(a)&=(-1)^{N+j+k+1}\left(\varphi_{N+1-j}^{[2N-k]}(a)\right)^*,\lb{3.41}\\
    \varphi_{N+k}^{[N+j-1]}(a)&=(-1)^{N+j+k}\left(\varphi_{N+1-j}^{[2N-k]}(b)\right)^*,\lb{3.42}
\end{align}
respectively.
For $1\leq j,k\leq N$, we compute using \eqref{2.7c} and \eqref{3.9},
    \begin{align}
        \big[\varphi_k, \varphi_{N+1-j}\big]_Z \Big|^b_a
            &\no=(-1)^N
            \Bigg[
            \sum_{\ell = 0}^{2N-1} (-1)^{1- \ell} \left(\varphi_{N+1-j}^{[2N-1-\ell]}\right)^*\varphi_{k}^{[\ell]}
            \Bigg]
            \Bigg|^b_a\\
            &=(-1)^{N}
            \Bigg[
            \sum_{\ell = 0}^{2N-1} (-1)^{1- \ell} \left(
            \varphi_{N+1-j}^{[2N-1-\ell]}(b)
            \right)^*
            \varphi_{k}^{[\ell]}(b)
            \Bigg]\lb{3.43}\\
            &\quad +(-1)^{N+1}
            \Bigg[
            \sum_{\ell = 0}^{2N-1} (-1)^{1- \ell} \left(
            \varphi_{N+1-j}^{[2N-1-\ell]}(a)
            \right)^*
            \varphi_{k}^{[\ell]}(a)
            \Bigg].\no
    \end{align}
     Since $\varphi_{k}^{[\ell]}(b)=0$ for $0\leq \ell \leq N-1$ and $\varphi_{N+1-j}^{[2N-1-\ell]}(b)=0$ for $N\leq \ell \leq 2N-1$ by \eqref{3.9}, we conclude that the first sum after the second equality in \eqref{3.43} vanishes. So conclusively we observe that
    \begin{align}
        &\big[\varphi_k, \varphi_{N+1-j}\big]_Z \Big|^b_a\lb{3.45}\\
        &\quad= (-1)^{N+1} \Bigg[\sum_{\ell = 0}^{N-1} (-1)^{1- \ell} \left(\varphi_{N+1-j}^{[2N-1-\ell]}(a)\right)^*\varphi_{k}^{[\ell]}(a)\no\\
        &\hspace*{2.5cm} + \sum_{\ell =N}^{2N-1} (-1)^{1- \ell} \left(\varphi_{N+1-j}^{[2N-1-\ell]}(a)\right)^*\varphi_{k}^{[\ell]}(a)\Bigg]\no\\
        &\quad= (-1)^{N+1} \Bigg[\sum_{\ell = 0}^{N-1} (-1)^{1- \ell} \left(\varphi_{N+1-j}^{[2N-1-\ell]}(a)\right)^*\delta_{k,\ell+1}I_M\no\\
        &\hspace*{2.5cm} + \sum_{\ell =N}^{2N-1} (-1)^{1- \ell} \left(\delta_{2N-\ell,N+1-j}I_M\right)^*\varphi_{k}^{[\ell]}(a)\Bigg]\no\\
        &\quad=(-1)^{N+1}\bigg[(-1)^{1-(k-1)} \left(\varphi_{N+1-j}^{[2N-1-(k-1)]}(a)\right)^*+(-1)^{1-(N-1+j)}\varphi_{k}^{[N-1+j]}(a)\bigg]\no\\
        &\quad=(-1)^{N+1}\bigg[(-1)^k \left(\varphi_{N+1-j}^{[2N-k]}(a)\right)^* +(-1)^{N+j}\varphi_{k}^{[N-1+j]}(a)\bigg].\no
        \end{align}
    By Lemma \ref{l2.3} we obtain
    \begin{equation}\lb{3.44Y}
        \big[\varphi_{k},\varphi_{N+1-j}\big]_Z' = \varphi_{N+1-j}^*W \big(\tau_{{}_Z} \varphi_{k}\big) - \big(\tau_{{}_Z} \varphi_{N+1-j}\big)^* W \varphi_{k}=0_M,
    \end{equation}
    since $\tau_{{}_Z}\varphi_{N+1-j}=0_M$ and $\tau_{{}_Z}\varphi_{k}=0_M$. Therefore, in particular, \eqref{3.45} and \eqref{3.44Y} imply
    \begin{align}\lb{3.47}
        0_M &=\big[\varphi_k, \varphi_{N+1-j}\big]_Z \Big|^b_a\no \\
        &=(-1)^{N+1}\bigg[(-1)^k \left(\varphi_{N+1-j}^{[2N-k]}(a)\right)^* +(-1)^{N+j}\varphi_{k}^{[N-1+j]}(a)\bigg].
    \end{align}
    Hence, \eqref{3.47} yields \eqref{3.41} after a simple algebraic manipulation.
    \par
    For \eqref{3.42}, we perform a similar calculation, using \eqref{3.9} once again:
    \begin{align}
        \big[\varphi_{N+k}, \varphi_{N+1-j}\big]_Z\Big |_a^b&=(-1)^N
            \Bigg[
            \sum_{\ell = 0}^{2N-1} (-1)^{1- \ell} \left(\varphi_{N+1-j}^{[2N-1-\ell]}\right)^*\varphi_{N+k}^{[\ell]}
            \Bigg]
            \Bigg|^b_a
            &\\
            &=(-1)^{N}
            \Bigg[
            \sum_{\ell = 0}^{N-1} (-1)^{1- \ell} 0_M
            \varphi_{N+k}^{[\ell]}(b)\no\\
            &\hspace*{2cm}+\sum_{\ell = N}^{2N-1} (-1)^{1- \ell} \left(
            \varphi_{N+1-j}^{[2N-1-\ell]}(b)
            \right)^*
            \delta_{k,\ell+1}I_M
            \Bigg]\no\\
            &\quad+(-1)^{N+1}
            \Bigg[
            \sum_{\ell = 0}^{N-1} (-1)^{1- \ell} \left(
            \varphi_{N+1-j}^{[2N-1-\ell]}(a)
            \right)^*
            0_M\no\\
            &\hspace*{2.7cm}+\sum_{\ell = N}^{2N-1} (-1)^{1- \ell} \left(
            \delta_{N+1-j,2N-\ell}I_M
            \right)^*
            \varphi_{N+k}^{[\ell]}(a)
            \Bigg]\no\\
            &=(-1)^N\bigg[(-1)^{k}\left(\varphi_{N+1-j}^{[2N-k]}(b)\right)^*-(-1)^{N+j}\varphi_{N+k}^{[N-1+j]}(a)\bigg].\no
    \end{align}
    Thus, after another application of Lemma \ref{l2.3}, we have
    \begin{align}\lb{3.50}
        0_M&=\big[\varphi_{N+k}, \varphi_{N+1-j}\big]_Z\Big |_a^b\no\\
        &= (-1)^N\bigg[(-1)^{k}\left(\varphi_{N+1-j}^{[2N-k]}(b)\right)^*-(-1)^{N+j}\varphi_{N+k}^{[N-1+j]}(a)\bigg].
    \end{align}
    Finally, we simplify \eqref{3.50} to obtain \eqref{3.42}, as desired.
\end{proof}
The two matrices $A_{\rm K}$ and $B_{\rm K}$ are invertible.  As a result, $Y(b)$ can be isolated in the boundary condition in \eqref{3.31}.
\begin{proposition}\lb{p3.5}
Assume Hypothesis \ref{h2.1}.  The matrices $A_{\rm K}$ and $B_{\rm K}$ defined by \eqref{3.29} and \eqref{3.30} are invertible. In particular,
\begin{equation}\lb{3.51}
    \dom(H_{Z,{\rm K}}) = \big\{y\in \dom(H_{Z,\max})\,\big|\, Y(b)=B_{\rm K}^{-1}A_{\rm K}Y(a)\big\}.
\end{equation}
\end{proposition}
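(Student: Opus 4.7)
The plan is to first reduce the invertibility of $A_{\rm K}$ and $B_{\rm K}$ to that of the single block $\Phi_0(b)$, then establish the latter via a uniqueness argument for the Shin--Zettl system, and finally derive \eqref{3.51} by left-multiplication.

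For the first reduction, the block structure \eqref{3.30} is decisive: $B_{\rm K}$ is block lower triangular with diagonal blocks $\Phi_N(a)$ and $I_{M,N}$, while swapping the two column-blocks of $A_{\rm K}$ produces a block upper triangular matrix with diagonal blocks $I_{M,N}$ and $\Phi_0(b)$. Hence $A_{\rm K}$ is invertible if and only if $\Phi_0(b)$ is, and $B_{\rm K}$ is invertible if and only if $\Phi_N(a)$ is. The identity \eqref{3.40} derived in the proof of Theorem \ref{T3.4}, combined with the invertibility of $J_{M,N}$, makes these two conditions equivalent, so it suffices to verify that $\Phi_0(b)$ is invertible.

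For the core step, I would argue injectivity of $\Phi_0(b)$. Assuming $\Phi_0(b)\mathbf{c} = 0$ for $\mathbf{c}=(c_1,\ldots,c_N)^T$ with $c_k = (c_{k,i})_{i=1}^M \in \bbC^M$, form
\[
    \phi := \sum_{k=1}^{N}\sum_{i=1}^{M} c_{k,i}\varphi_{k,i} \in \ker(H_{Z,\max}).
\]
The normalization \eqref{3.10} forces $\phi^{[\ell-1]}(b) = 0$ for $1 \leq \ell \leq N$, while the relation $\Phi_0(b)\mathbf{c}=0$, interpreted row-block by row-block, yields $\phi^{[\ell-1]}(b)=0$ for $N+1\leq \ell \leq 2N$. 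Since the complete vector $Y(b)$ of quasi-derivatives of $\phi$ vanishes, uniqueness of solutions to the first-order Shin--Zettl system underlying $\tau_{{}_Z}\phi=0$ gives $\phi \equiv 0$ on $[a,b]$, and a second application of \eqref{3.10} at $a$ then forces $c_\ell = \phi^{[\ell-1]}(a) = 0$ for all $\ell$.

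Once invertibility is established, \eqref{3.51} follows immediately from \eqref{3.31} by left-multiplying the boundary condition $A_{\rm K}Y(a) = B_{\rm K}Y(b)$ by $B_{\rm K}^{-1}$. The main obstacle is the ODE-uniqueness step above: one must invoke that a solution of the homogeneous Shin--Zettl system with a full set of vanishing quasi-derivatives at a single endpoint is identically zero on $[a,b]$. This is standard, following from the existence and uniqueness theory for first-order linear systems applied to the matrix reformulation of $\tau_{{}_Z} y=0$ (see \cite{MZ95a}), but warrants explicit citation since quasi-derivatives rather than classical derivatives are involved.
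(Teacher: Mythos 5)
Your proof is correct and follows essentially the same route as the paper: reduce the invertibility of $A_{\rm K}$ and $B_{\rm K}$ to that of a single $N\times N$ block via the block-triangular structure, and establish that block's injectivity by assembling a kernel element of $H_{Z,\max}$ whose full vector of quasi-derivatives vanishes at one endpoint and invoking uniqueness for the initial value problem (the precise citation is \cite[Proposition 2.4]{MZ95a}). The only cosmetic difference is that you work with $\Phi_0(b)$ and use the Lagrange-bracket identity \eqref{3.40} to obtain the invertibility of $\Phi_N(a)$ in the same stroke, whereas the paper proves the invertibility of $\Phi_N(a)$ directly (with initial data at $a$) and disposes of $A_{\rm K}$ by an analogous argument.
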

\begin{proof}
We will show that $B_{\rm K}$ is invertible; the invertibility of $A_{\rm K}$ can be shown with an analogous argument. We define the following vectors:
\begin{equation}\lb{3.52}
    D_{j,k}=\begin{pmatrix}
        \varphi^{[N]}_{N+j,k}(a)\\[1ex]
        \varphi^{[N+1]}_{N+j,k}(a)\\
        \vdots\\[.5ex]
        \varphi^{[2N-1]}_{N+j,k}(a)
    \end{pmatrix}\in (\bbC^M)^{N}, \quad 1\leq j \leq N,\, 1\leq k \leq M,
\end{equation}
which are the columns of the matrix $\Phi_{N}(a)$. We show that $\{D_{j,k}\}_{j=1,k=1}^{N,M}$ is linearly independent. Suppose that there exists $\{d_{j,k}\}_{j=1, k=1}^{N,M}\subset\bbC$ such that
\begin{equation}\lb{3.50z}
    \sum_{j=1,k=1}^{N,M} d_{j,k} D_{j,k} = 0.
\end{equation}
By \eqref{3.52}, the identity in \eqref{3.50z} is equivalent to
\begin{equation}\lb{3.54}
    \sum_{j=1, k=1}^{N,M}d_{j,k}\varphi^{[N+\ell-1]}_{N+j,k}(a)=0,\quad 1\leq \ell \leq N.
\end{equation}
We define the function $\Psi$ over the interval $[a,b]$ by
\begin{equation}
    \Psi=\sum_{j=1, k=1}^{N,M}d_{j,k}\varphi_{N+j,k}.   
\end{equation}
Now, we consider the first $2N-1$ quasi-derivatives of $\Psi$ at $a$. Recall that for $1\leq j \leq N$, $1\leq k \leq M$, $\varphi_{j,k}$ is the unique function in $\ker(H_{Z,\max})$ such that $(\Gamma{\varphi_{j+N,k}})_{\ell}=\delta_{j+N,\ell}e_k$ for all $1 \leq \ell \leq 2N$. From this, we note that for $1\leq j \leq N$, $1\leq k\leq M$, the first $N-1$ quasi-derivatives of $\varphi_{j+N,k}$ evaluated at $a$ are all zero. Since $\Psi$ is simply a linear combination of the $\varphi_{N+j,k}$, it is clear that $\Psi^{[\ell]}(a)=0$. Secondly, we observe that
\begin{equation}
    \Psi^{[N+\ell-1]}(a)=\sum_{j=1, k=1}^{N,M}d_{j,k}\varphi^{[N+\ell-1]}_{N+j,k}(a)=0,\quad 1\leq \ell \leq N,
\end{equation}
by \eqref{3.54}. In total, this yields that the first $2N-1$ quasi-derivatives of $\Psi$ evaluated at $a$ are zero. For our final observation, we note
\begin{equation}
    \tau_{{}_Z}\Psi=\sum_{j=1, k=1}^{N,M}d_{j,k}(\tau_{{}_Z}\varphi_{j,k})=\sum_{j=1,k=1}^{N,M}d_{j,k}(0)=0.   
\end{equation}
Therefore, $\Psi$ is a solution to the initial value problem given by
\begin{equation}\lb{3.58}
    \begin{cases}
    \tau_{{}_Z}f=0,\\
        f^{[\ell-1]}(a)=0,\quad 1\leq \ell\leq 2N.
    \end{cases}
\end{equation}
By \cite[Proposition 2.4]{MZ95a}, solutions to such initial value problems are unique. Since the zero function also satisfies the initial value problem \eqref{3.58}, we have $\Psi=0$. However, $\{\varphi_{j,k}\}_{j=1,k=1}^{2N,M}$ is linearly independent, so $d_{j,k}=0$ for all $j,k$. Therefore, $\left\{D_{j,k}\right\}_{j=1,k=1}^{N,M}$ is linearly independent, and it follows that the matrix $\Phi_N(a)$ is invertible. Taking
    \begin{equation}
    \widehat{B}_{\rm K}:=\left(\begin{array}{c | c}
        {\Phi_N(a)}^{-1} &0_{M,N}\\[1mm]
        \hline\\[-3.5mm]
        \Phi_N(b){\Phi_N(a)}^{-1} &I_{M,N}
    \end{array}\right),
    \end{equation}
one then verifies that
\begin{equation}
\widehat{B}_{\rm K}B_{\rm K} =
\left(\begin{array}{c | c}
        I_{M,N} &0_{M,N}\\
        \hline
        0_{M,N} &I_{M,N}
    \end{array}\right).
\end{equation}
Hence, $B_{\rm K}$ is invertible with $B_{\rm K}^{-1}=\widehat{B}_{\rm K}$.  Finally, \eqref{3.51} follows directly from \eqref{3.31} after a simple algebraic manipulation.
\end{proof}

For completeness, we recall the notion of relatively prime self-adjoint extensions (see \cite[p.~110]{AG81}).

\begin{definition}
    If $S$ is a densely defined symmetric operator and $T$, $T'$ are two self-adjoint extensions of $S$, then the {\it maximal common part} of $T$ and $T'$ is the operator $C_{T,T'}$ defined by
    \begin{equation}
        C_{T,T'}y = Ty, \quad y \in \dom(C_{T,T'}) = \{u \in \dom(T) \cap \dom(T')\ |\ Tu = T'u\}.
    \end{equation}
    In addition, $T$ and $T'$ are {\it relatively prime with respect to $S$} if $C_{T,T'} = S$; that is, if
    \begin{equation}
        \dom(T) \cap \dom(T') = \dom(S).
    \end{equation}
\end{definition}
Given a densely defined strictly positive closed operator $S$, the following abstract result shows that the Krein--von Neumann extension and any strictly positive self-adjoint extension of $S$ are always relatively prime with respect to $S$.
\begin{proposition}\lb{P3.7}
    Suppose $S$ is a densely defined, closed, strictly positive operator in a separable Hilbert space $(\sH,\langle\dott,\dott\rangle_{\sH})$ and let $S_{\rm K}$ denote its Krein von--Neumann extension. If $\widetilde {S}$ is a strictly positive self-adjoint extension of $S$, then $S_{\rm K}$ and $\widetilde{S}$ are relatively prime with respect to $S$.  In particular, $S_{\rm K}$ and the Friedrichs extension $S_{\rm F}$ of $S$ are relatively prime with respect to $S$.
\end{proposition}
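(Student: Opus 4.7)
The plan is to show $\dom(S_{\rm K}) \cap \dom(\widetilde{S}) \subseteq \dom(S)$, with the reverse inclusion being automatic since both $S_{\rm K}$ and $\widetilde{S}$ are extensions of $S$. To accomplish the nontrivial inclusion, I will exploit the direct-sum decomposition
\begin{equation*}
    \dom(S_{\rm K}) = \dom(S) \dotplus \ker(S^*)
\end{equation*}
from \eqref{1.8}, which applies because $S$ is strictly positive by hypothesis.

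Given any $y \in \dom(S_{\rm K}) \cap \dom(\widetilde{S})$, the above decomposition allows me to write $y = u + v$ uniquely with $u \in \dom(S)$ and $v \in \ker(S^*)$. Since $\dom(S) \subseteq \dom(\widetilde{S})$, subtraction yields $v = y - u \in \dom(\widetilde{S})$. The next step is to invoke the chain $S \subseteq \widetilde{S} = \widetilde{S}^* \subseteq S^*$ (the last inclusion from taking adjoints of $S \subseteq \widetilde{S}$ and using self-adjointness of $\widetilde{S}$), which implies that $\widetilde{S}$ acts as the restriction of $S^*$ to $\dom(\widetilde{S})$. Therefore
\begin{equation*}
    \widetilde{S} v = S^* v = 0.
\end{equation*}

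At this point the strict positivity of $\widetilde{S}$ finishes the argument: if $\widetilde{S} \geq \varepsilon' I_{\sH}$ for some $\varepsilon' > 0$, then $0 = \langle v, \widetilde{S} v \rangle_{\sH} \geq \varepsilon' \langle v, v \rangle_{\sH}$, forcing $v = 0$. Hence $y = u \in \dom(S)$, establishing the desired inclusion and therefore $C_{S_{\rm K}, \widetilde{S}} = S$.

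For the final sentence, I will apply the general result to $\widetilde{S} = S_{\rm F}$. This is legitimate because Theorem \ref{T1.1} guarantees that when $S \geq \varepsilon I_{\sH}$ for some $\varepsilon > 0$, the Friedrichs extension satisfies $S_{\rm F} \geq \varepsilon I_{\sH}$ as well, so $S_{\rm F}$ is a strictly positive self-adjoint extension of $S$ and the hypotheses of the just-proven proposition are fulfilled. I do not anticipate a substantive obstacle; the only point requiring care is the derivation $\widetilde{S} \subseteq S^*$ from $S \subseteq \widetilde{S}$ via adjoint reversal, which must be stated explicitly so that $\widetilde{S} v = S^* v$ is justified before strict positivity of $\widetilde{S}$ is applied.
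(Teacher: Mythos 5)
Your proposal is correct and follows essentially the same route as the paper's proof: decompose $y\in\dom(S_{\rm K})\cap\dom(\widetilde{S})$ via \eqref{1.8} as $y=u+v$ with $v\in\ker(S^*)$, note $v\in\dom(\widetilde{S})$ and $\widetilde{S}v=S^*v=0$, and conclude $v=0$ from strict positivity of $\widetilde{S}$. Your explicit justification of $\widetilde{S}\subseteq S^*$ by adjoint reversal is a minor clarification of a step the paper leaves implicit, but the argument is the same.
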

\begin{proof}
    We will show $\dom(S_{\rm K})\cap \dom\big(\widetilde{S}\big)=\dom(S)$.
    By \eqref{1.8} in Theorem \ref{T1.1}, for any $u\in\dom(S_{\rm K})$, there exists $\psi\in\dom(S)$ and $\phi\in\ker(S^*)$ such that
    \begin{equation}
        u=\psi+\phi.
    \end{equation}
    If we assume $u\in \dom\big(\widetilde{S}\big)$ also, then we have
    \begin{equation}
        \phi=(u-\psi)\in\dom(\widetilde{S}),
    \end{equation}
    given that $\dom(S)$ is a subspace of $\dom(\widetilde{S})$. Applying the fact that $\phi\in \ker(S^*)\cap\dom(\widetilde{S})$, yields
    \begin{equation}
        \widetilde{S}\phi=S^*\phi=0.
    \end{equation}
    Since we assume that $\widetilde{S}$ is strictly positive, we may conclude that $\phi=0$. Therefore, $u=\psi\in \dom(S)$.
\end{proof}
\begin{remark}\lb{R3.7}
    Although Proposition \ref{P3.7} establishes that $\Hz{\rm F}$ and $\Hz{\rm K}$ are relatively prime with respect to $H_{Z,\min}$, our characterization in Theorem \ref{T3.4} yields a computational proof of the same fact. If $f\in \dom(H_{Z,\rm F})\cap \dom(H_{Z, \rm K})$, combining \eqref{3.4} and \eqref{3.31}, we obtain the boundary condition
    \begin{equation}
        \left( \begin{array}{c|c}
        -\Phi_0(a) & I_{M,N} \\ \hline
        \Phi_0(b) & 0_{M,N}
    \end{array}\right)\begin{pmatrix}
            0 \\
            \vdots \\
            0 \\
            f^{[N]}(a) \\
            \vdots
            \\
            f^{[2N-1]}(a)
        \end{pmatrix} = \left( \begin{array}{c|c}
         \Phi_N(a) & 0_{M,N} \\ \hline
        -\Phi_N(b) & I_{M,N}
    \end{array}\right)\begin{pmatrix}
            0 \\
            \vdots \\
            0 \\
            f^{[N]}(b) \\
            \vdots
            \\
            f^{[2N-1]}(b)
        \end{pmatrix}.
    \end{equation}
    Expanding out, we obtain
    \begin{equation}
        \begin{pmatrix}
            f^{[N]}(a) \\
            \vdots
            \\
            f^{[2N-1]}(a)
        \end{pmatrix}=0=
        \begin{pmatrix}
            f^{[N]}(b) \\
            \vdots
            \\
            f^{[2N-1]}(b)
        \end{pmatrix},
    \end{equation}
    which implies $\dom(H_{Z,\rm K})\cap \dom(H_{Z,\rm F})=\dom(H_{Z,\min})$ by \eqref{2.9}.  Thus, $H_{Z,{\rm K}}$ and $H_{Z,{\rm F}}$ are relatively prime with respect to $H_{Z,\min}$.\hfill$\diamond$
\end{remark}
 
\section{Applications} \lb{s4}
 
Here we consider applications of Theorem \ref{T3.4} and Proposition \ref{p3.5} to generalized four-coefficient Sturm--Liouville expressions, a fourth-order differential expression, and the even order pure differential expression in \eqref{1.13b}.  In particular, we use our abstract approach to recover the characterization by Granovskyi and Oridoroga \cite{GO18a} of the Krein--von Neumann extension corresponding to \eqref{1.13b}.

 
\subsection{Four-coefficient Generalized Sturm--Liouville Operator}
 
In this example, we consider a regular four-coefficient generalized Sturm--Liouville operator with matrix-valued coefficients.  Assuming that the associated minimal operator is strictly positive, we apply the results of Section \ref{s3} to characterize its Krein--von Neumann extension.
 
Let $M\in \bbN$ and $[a,b]\subset \bbR$ be fixed.  Suppose that $p,q,r,s\in \Lp([a,b])^{M\times M}$ satisfy the following conditions:\\[1mm]
$(i)$ $p$ and $r$ are positive definite a.e.~on $[a,b]$\\[1mm]
$(ii)$ $p^{-1},q,r,s\in \Lp^1([a,b])^{M\times M}$\\[1mm]
$(iii)$ $q^*=q$ a.e.~on $[a,b]$
 
\medskip
 
The assumptions in $(i)$--$(iii)$ imply that Hypothesis \ref{h2.1} is satisfied with $N=1$, $W=r$, and
\begin{equation}\lb{star}
Z=
\begin{pmatrix}
-s & p^{-1}\\
q & s^*
\end{pmatrix}
\in \left[\Lp^1([a,b])^{M\times M}\right]^{2\times 2}.
\end{equation}
Note that condition (A2) in Hypothesis \ref{h2.1} is vacuous in the case $N=1$.  By \eqref{2.2} and \eqref{2.3}, the quasi-derivatives corresponding to \eqref{star} of a function $y\in \mathcal{D}_Z^{[2]}([a,b])$ are
\begin{align}
    y^{[1]}&=p[y'+sy],\\
    y^{[2]}&=\left[\big(y^{[1]}\big)'-\big(qy+s^*y^{[1]}\big)\right]\no\\
    &=\left[(p[y'+sy])'-s^*p[y'+sy]-qy\right].\no
\end{align}
In particular, \eqref{star} gives rise to the following quasi-differential expression:
\begin{equation}\lb{star2}
\tau_{{}_Z}y = r^{-1}\big[ -(p[y'+sy])'+s^*p[y'+sy]+qy \big],\quad y\in \mathcal{D}_Z^{[2]}([a,b]).
\end{equation}
The expression \eqref{star2} is a generalization of the four-coefficient generalized Sturm--Liouville expression treated in \cite{EGNT13}, where the coefficients are assumed to be scalar-valued (i.e., $M=1$) and $s$ is assumed to be real-valued.
 
The minimal operator $H_{Z,\min}$ corresponding to \eqref{star2} is given by \eqref{2.9}.  Note that $Z_{1,2}=p^{-1}$ is positive definite a.e.~on $[a,b]$, so $H_{Z,\min}$ is bounded from below by Theorem \ref{t2.5}.  Assuming $H_{Z,\min}$ is strictly positive, Theorem \ref{T3.4} and Proposition \ref{p3.5} can be applied to characterize the Krein--von Neumann extension $H_{Z,\rm K}$ of $H_{Z,\min}$.

Here we will compute the matrices $A_{\rm K}$, $B_{\rm K}$, and $B_{\rm K}^{-1}A_{\rm K}$ appearing in Theorem \ref{T3.4} and Proposition \ref{p3.5}.  Since $H_{Z,\min}$ is strictly positive, we have $\dim(\ker(H_{Z,\max}))=2M$.  Let $\{\varphi_{j,k}\}_{j=1,k=1}^{2,M}$ denote the basis for $\ker(H_{Z,\max})$ guaranteed to exist by Lemma \ref{L3.2}.  Define the $M\times M$ matrix-valued functions $\varphi_1$ and $\varphi_2$ by
\begin{equation}
\varphi_j = (\varphi_{j,1}\,|\,\varphi_{j,2}\,|\,\cdots\,|\,\varphi_{j,M}),\quad j\in \{1,2\},
\end{equation}
so that $\varphi_1(a)=\varphi_2(b)=I_{M}$ and $\varphi_1(b)=\varphi_2(a)=0_M$.  By Theorem \ref{T3.4}, we obtain the following matrices:
\begin{equation}
    A_{\rm K}=\begin{pmatrix}
    -\varphi_1^{[1]}(a) & I_M\\
    \varphi_1^{[1]}(b) & 0_M\\
\end{pmatrix}\quad \text{and}\quad B_{\rm K}=\begin{pmatrix}
    \varphi_2^{[1]}(a) & 0_M\\
    -\varphi_2^{[1]}(b) & I_M\\
\end{pmatrix}.
\end{equation}
Finally, we obtain the inverse of \(B_{\rm K}\) in terms of the inverse of $\varphi_2^{[1]}(a)$ (the invertibility of $\varphi_2^{[1]}(a)$ follows from the proof of Proposition \ref{p3.5}, since $\varphi_2(a)$ plays the role of $\Phi_N(a)$ in this example):
\begin{equation}
    B_{\rm K}^{-1}=\begin{pmatrix}
        \big(\varphi_2^{[1]}(a)\big)^{-1} & 0_M\\
        \varphi_2^{[1]}(b)\big(\varphi_2^{[1]}(a)\big)^{-1} & I_M\\
    \end{pmatrix}.
\end{equation}
Therefore,
\begin{equation}\lb{4.6a}
    \dom(H_{Z,\rm K})=\big\{y\in\dom(H_{Z,\max})\,\big|\,Y(b)=T_{\rm K}Y(a)\big\},   
\end{equation}
where by Proposition \ref{p3.5},
\begin{equation}\lb{4.6}
    T_{\rm K}=B_{\rm K}^{-1}A_{\rm K}=\begin{pmatrix}
        -\big(\varphi_2^{[1]}(a)\big)^{-1}\varphi_1^{[1]}(a) & \big(\varphi_2^{[1]}(a)\big)^{-1}\\[1.5mm]
        \varphi_1^{[1]}(b)-\varphi_2^{[1]}(b)\big(\varphi_2^{[1]}(a)\big)^{-1}\varphi_1^{[1]}(a) & \varphi_2^{[1]}(b)\big(\varphi_2^{[1]}(a)\big)^{-1}\\
    \end{pmatrix}.
\end{equation}
The characterization of the Krein--von Neumann extension given in \eqref{4.6a} and \eqref{4.6} generalizes the result in \cite[Theorem 12.3]{EGNT13} to the case of matrix-valued coefficients $p,q,r,s$. In fact, one can see that \eqref{4.6} is equivalent to the form presented in \cite[Theorem 12.3]{EGNT13} when $p,q,r,s$ are scalar-valued (cf.~\eqref{1.12b}):
\begin{equation}
    T_{\rm K}=B^{-1}_{\rm K}A_{\rm K}=\frac{1}{u_2^{[1]}(a)}
    \begin{pmatrix}
        -u_1^{[1]}(a) & 1\\[1.5mm]
        u_2^{[1]}(a)u_1^{[1]}(b)-u_2^{[1]}(b)u_1^{[1]}(a) & u_2^{[1]}(b)\\
    \end{pmatrix},  
\end{equation}
where $u_j$, $j\in \{1,2\}$, are solutions to $\tau_Z y=0$ that satisfy the boundary conditions $u_1(a)=u_2(b)=1$ and $u_1(b)=u_2(a)=0$.

\subsection{Fourth-Order Differential Operator}\lb{s4.2}
 
Let $M=N=1$, $[a,b]\subset\bbR$, and $W=1$ a.e.~on $[a,b]$.  Consider the matrix-valued function $Z=(Z_{j,k})_{j,k=1}^4\in \Lp^1([a,b])^{4\times4}$ given by
\begin{equation}
    Z=
    \begin{pmatrix}
        0 & 1 & 0 & 0 \\
        0 & 0 & 1 & 0 \\
        0 & 0 & 0 & 1 \\
        -1 & 0 & 0 & 0
    \end{pmatrix}\,\text{ a.e.~on $[a,b]$}.
\end{equation}
One then verifies that Hypothesis \ref{h2.1} holds, and the corresponding differential expression is
\begin{equation}\lb{4.11b}
\tau_{{}_Z}y = y^{(4)}+y,\quad y\in \mathfrak{D}_Z^{[4]}([a,b]).
\end{equation}
The minimal operator $H_{Z,\min}$ corresponding to \eqref{4.11b} is given by \eqref{2.8} and is strictly positive.  Since $Z_{2,3}=1>0$ a.e.~on $[a,b]$, Theorem \ref{T3.4} and Proposition \ref{p3.5} can be applied to characterize the Krein--von Neumann extension $H_{Z,\rm K}$ of $H_{Z,\min}$.  Here we will compute the matrices $A_{\rm K}$, $B_{\rm K}$, and $B_{\rm K}^{-1}A_{\rm K}$ appearing in Theorem \ref{T3.4} and Proposition \ref{p3.5}.

To determine $A_{\rm K}$ and $B_{\rm K}$, we must determine the basis $\{\varphi_{k,1}\}_{k=1}^4$ shown to exist in Lemma \ref{L3.2}.  Standard solution methods for linear differential equations imply that the solution space of $\tau_{{}_Z} y=0$ on the interval $[a,b]$ has the basis
\begin{equation}\lb{4.12e}
    \big\{ e^{\omega x},\, e^{-\omega x},\, e^{\omega^3x},\, e^{-\omega^3 x}\big\},
\end{equation}
where $\omega=\frac{1}{\sqrt{2}}+\frac{i}{\sqrt{2}}$ denotes a primitive fourth root of $-1$. We denote the elements in \eqref{4.12e} by $y_1$, $y_2$, $y_3$, and $y_4$, respectively.
 
We must find the basis $\{\varphi_{k,1}\}_{k=1}^4$ of this solution space with $\Gamma \varphi_{k,1} = e_k$, where $e_k=(\delta_{j,k})_{j=1}^4$ denotes the $k$th standard basis vector in $\bbC^4$.  To find it, we need to find coefficients $\{c_{j,k}\}_{j,k=1}^4\subset \bbC$ such that
\begin{equation}
    \varphi_{\ell,1}=\sum_{j=1}^4 c_{j,\ell}y_j,\quad 1\leq \ell\leq 4.
\end{equation}
The set of equations $\Gamma \varphi_{\ell,1}=e_{\ell}$, $1\leq \ell\leq 4$, can then be arranged into a matrix equation:
\begin{equation}
    \left(\, \Gamma y_1 \,|\, \Gamma y_2 \,|\, \Gamma y_3 \,|\, \Gamma y_4
    \,\right)(c_{j,k})_{j,k=1}^4 = I_4.
\end{equation}
Thus,
\begin{equation}
    (c_{j,k})_{j,k=1}^4=\left(\, \Gamma y_1 \,|\, \Gamma y_2 \,|\, \Gamma y_3 \,|\, \Gamma y_4
    \,\right)^{-1}.
\end{equation}
So to determine the coefficients of $\varphi_{\ell,1}$ for each $1\leq \ell\leq 4$, it suffices to invert the matrix
\begin{equation}
\Lambda:=\left(\, \Gamma y_1 \,|\, \Gamma y_2 \,|\, \Gamma y_3 \,|\, \Gamma y_4
    \,\right)
    \end{equation}
and read off its $\ell$th column.

On the interval $[a,b]$, we obtain
\begin{equation}\lb{4.15}
    \Lambda=
    \begin{pmatrix}
    e^{\omega a} &e^{-\omega a} &e^{\omega^3 a} &e^{-\omega^3 a} \\
    \omega e^{\omega a} & -\omega e^{-\omega a}& \omega^3 e^{\omega^3 a}& -\omega^3 e^{-\omega^3 a}\\
    e^{\omega b} &e^{-\omega b} &e^{\omega^3 b} &e^{-\omega^3 b} \\
    \omega e^{\omega b} & -\omega e^{-\omega b}& \omega^3 e^{\omega^3 b}& -\omega^3 e^{-\omega^3 b}\\
\end{pmatrix}.\end{equation}
To simplify expressions, we will assume $a=0$ and $b=\sqrt{2}\pi$ throughout the remainder of this example. On this interval, \eqref{4.15} is of the form
\begin{equation}\lb{4.16}
\Lambda=\begin{pmatrix}
1 & 1 & 1 & 1 \\
\omega & -\omega & \alpha^3 & -\alpha^3 \\
-e^{\pi} & -e^{-\pi} & -e^{-\pi} & -e^{\pi} \\
-\omega e^{\pi} & \omega e^{-\pi} & -\alpha^3e^{-\pi} & \alpha^3e^{\pi}
\end{pmatrix},\end{equation}
where $\alpha:=e^{\pi}$.  The inverse of \eqref{4.16} is easily computed by hand or by computer algebra, yielding the matrix
\begin{equation}\lb{4.17}
\Lambda^{-1}=\frac{\omega}{\sqrt{2}(\alpha^2-1)}\begin{pmatrix}
-1 & \omega^3 & -\alpha & \omega^3 \alpha \\
\alpha^2 & \omega^3 \alpha^2 & \alpha & \omega^3 \alpha \\
-i\alpha^2 & -\omega^3 \alpha^2 & -i\alpha & -\omega^3 \alpha \\
i & -\omega^3 & i\alpha & -\omega^3\alpha
\end{pmatrix}.
\end{equation}
Reading off the columns of \eqref{4.17}, we obtain the four functions
\begin{equation}
\begin{cases}
\varphi_{1,1}(x) = \displaystyle{\frac{\omega}{\sqrt{2}(\alpha^2-1)}}\big[-e^{\omega x}+\alpha^2e^{-\omega x} +i(-\alpha^2e^{\omega^3 x} + e^{-\omega^3 x}) \big],\\[4mm]
\varphi_{2,1}(x) = \displaystyle{-\frac{1}{\sqrt{2}(\alpha^2-1)}}\big[ e^{\omega x}+ \alpha^2e^{-\omega x}- \alpha^2e^{\omega^3x} -e^{-\omega^3 x}\big],\\[4mm]
\varphi_{3,1}(x) = \displaystyle{\frac{\alpha\omega}{\sqrt{2}(\alpha^2-1)}}\big[-e^{\omega x}+e^{-\omega x} +i(-e^{\omega^3 x}+e^{-\omega^3 x})\big],\\[4mm]
\varphi_{4,1}(x) = \displaystyle{-\frac{\alpha}{\sqrt{2}(\alpha^2-1)}}\big[e^{\omega x}+e^{-\omega x}-e^{\omega^3 x}- e^{-\omega^3 x} \big],\quad x\in[0,\sqrt{2}\pi].
\end{cases}
\end{equation}
By taking second and third derivatives and evaluating at $0$ and $\sqrt{2}\pi$, we compute the boundary condition matrices as
\begin{equation}
    A_{\rm K}=
    \begin{pmatrix}
    1 & \sqrt{2}i\frac{\alpha^2+1}{\alpha^2-1} & 1 & 0 \\
    -\sqrt{2}\frac{\alpha^2+1}{\alpha^2-1} & -i & 0 & 1 \\
    0 & \frac{2\sqrt{2}\alpha i}{\alpha^2-1} & 0 & 0 \\
    -2\sqrt{2} \frac{\alpha}{\alpha^2-1} & 0 & 0 & 0
    \end{pmatrix},
\end{equation}
\begin{equation}
    B_{\rm K}=
    \begin{pmatrix}
    0 & -2\sqrt{2}i\frac{\alpha}{\alpha^2-1} & 0 & 0 \\
    \frac{2\sqrt{2}\alpha}{\alpha^2-1} & 0 & 0 & 0 \\
    1 & -\sqrt{2}i\frac{\alpha^2+1}{\alpha^2-1} & 1 & 0 \\
    \sqrt{2}\frac{\alpha^2+1}{\alpha^2-1} & -i & 0 & 1
    \end{pmatrix}.
\end{equation}
Using hyperbolic trigonometric identities, we have
\begin{equation}
B^{-1}_{\rm K}=\begin{pmatrix}
0 & \frac{\sinh(\pi)}{\sqrt{2}} & 0 & 0 \\
\frac{i\sinh(\pi)}{\sqrt{2}} & 0 & 0 & 0\\
-\cosh(\pi) & -\frac{\sinh(\pi)}{\sqrt{2}} & 1 & 0 \\
-\frac{\sinh(\pi)}{\sqrt{2}} & -\cosh(\pi) & 0 & 1
\end{pmatrix}.
\end{equation}
Finally, we compute $B^{-1}_{\rm K}A_{\rm K}$ and obtain
\begin{equation}
B^{-1}_{\rm K}A_{\rm K}=\begin{pmatrix}
-\cosh(\pi) & -\frac{i\sinh(\pi)}{\sqrt{2}} & 0 & \frac{\sinh(\pi)}{\sqrt{2}} \\
\frac{i\sinh(\pi)}{\sqrt{2}} & -\cosh(\pi) & \frac{i\sinh(\pi)}{\sqrt{2}} & 0 \\
0 & -\frac{i\sinh(\pi)}{\sqrt{2}} & -\cosh(\pi) & -\frac{\sinh(\pi)}{\sqrt{2}} \\
\frac{\sinh(\pi)}{\sqrt{2}} & 0 & -\frac{\sinh(\pi)}{\sqrt{2}} & -\cosh(\pi)
\end{pmatrix}.
\end{equation}
 
\subsection{Pure Differential Operator of Order $2N$}
Let $M=1$, $N\in \bbN$, $[a,b]\subset\bbR$, and $W=1$ a.e.~on $[a,b]$.  Consider $Z=(Z_{j,k})_{j,k=1}^{2N}\in \Lp^1([a,b])^{2N\times 2N}$ defined by
\begin{equation}\lb{4.23b}
Z_{j,k}=\delta_{j+1,k}\, \text{ a.e.~on $[a,b]$ for $1\leq j,k\leq 2N$}.
\end{equation}
That is, $Z$ is almost everywhere constant on $[a,b]$:
\begin{equation}\lb{4.24b}
    Z=\begin{pmatrix}
        0&1&0&\dots&0&0\\
        0&0&1&\dots&0&0\\
        \vdots&\vdots&&\ddots&&\vdots\\
        0&0&0&\dots&1&0\\
        0&0&0&\dots&0&1\\
        0&0&0&\dots&0&0
    \end{pmatrix}\,\text{ a.e.~on $[a,b]$.}
\end{equation}
One then verifies that Hypothesis \ref{h2.1} holds.  By \eqref{2.2} and \eqref{2.3}, the quasi-derivatives corresponding to \eqref{4.24b} of a function $y\in \mathfrak{D}_Z^{[2N]}([a,b])$ are simply ordinary derivatives:
\begin{equation}
y^{[j]}=y^{(j)},\quad 0\leq j\leq 2N.
\end{equation}
Thus, \eqref{4.24b} gives rise to the following (formally nonnegative) pure ordinary differential expression of order $2N$:
\begin{equation}
\tau_{{}_Z}y=(-1)^Ny^{(2N)},\quad y\in \mathfrak{D}_Z^{[2N]}([a,b]),
\end{equation}
and $\tau_{{}_Z}$ generates the minimal operator $H_{Z,\min}$ in accordance with \eqref{2.9}.  The coefficient $Z_{N,N+1}=1>0$ a.e.~on $[a,b]$, and $H_{Z,\min}$ is strictly positive.  We will apply Theorem \ref{T3.4} and Proposition \ref{p3.5} to characterize the domain of the Krein--von Neumann extension $H_{Z,\rm K}$ of $H_{Z,\min}$.

The calculations throughout this subsection make extensive use of several combinatorial identities.  For completeness, we recall these identities in the following lemma and refer to \cite[Pages 58--59, Equations (17), (22), \& (23)]{Kn97} for further details. We employ the combinatorial convention (see, e.g., \cite[Page 55, Equation (6)]{Kn97}) that for nonnegative real numbers $r$ and all positive integers $k$,
\begin{equation}
    \binom{r}{-k}=\binom{r}{r+k}=0,
\end{equation}
which implies, upon taking $r=0$, that
\begin{equation}
    \frac{1}{(-k)!}=0.
\end{equation}
\begin{lemma}\lb{L4.1}
The following combinatorial identities hold:\\
$(i)$ If $p,q \in \bbZ$, then
            \begin{equation}\lb{4.20}
            \binom{-p}{q}=(-1)^{q}\binom{p+q-1}{q}.   
            \end{equation}
$(ii)$ If $s \in \bbR, r \in \bbZ_{\geq 0}, m,n \in \bbZ$, then,
            \begin{equation}\lb{4.21}
            \sum_{k=-\infty}^{\infty} \binom{r}{m+k} \binom{s}{n+k}= \binom{r+s}{r-m+n}.
            \end{equation}
$(iii)$ If $N \in \bbZ_{\geq 0}$, $1\leq j,k \leq N$, then
            \begin{equation}\lb{4.22}
            \sum_{\ell=1}^{N}(-1)^{\ell+k}{\binom{j-1}{\ell-1}} \binom{\ell-1}{k-1}=\delta_{j,k}.
            \end{equation}
\end{lemma}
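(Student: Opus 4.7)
The plan is to derive each of the three identities, noting that (i) and (ii) are classical results recorded in \cite{Kn97} and that the genuinely substantive content (for this paper) is (iii), which is designed so as to follow directly by combining (i) and (ii). I would therefore treat (i) and (ii) in a few lines each and devote the bulk of the argument to (iii).

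For identity (i), I would argue straight from the definition of the generalized binomial coefficient as a falling factorial. When $q \geq 0$, factoring $-1$ out of each of the $q$ numerator factors in $(-p)(-p-1)\cdots(-p-q+1)/q!$ yields exactly $(-1)^q\binom{p+q-1}{q}$; when $q < 0$, both sides vanish by the convention recorded immediately before the lemma. For identity (ii)---the Chu--Vandermonde convolution---I would compare coefficients of $x^{r-m+n}$ in the polynomial identity $(1+x)^{r+s} = (1+x)^r(1+x)^s$, using the formal binomial expansion for $(1+x)^s$; since $r \in \bbZ_{\geq 0}$, the expansion of $(1+x)^r$ terminates, so the bilateral sum in \eqref{4.21} has only finitely many nonzero terms and no convergence question arises.

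For identity (iii), the key step is to convert the signed binomial $(-1)^{\ell+k}\binom{\ell-1}{k-1}$ into a negative-index binomial so that (ii) may be applied. Using $(-1)^{\ell+k} = (-1)^{\ell-k}$, the symmetry $\binom{\ell-1}{k-1} = \binom{\ell-1}{\ell-k}$, and (i) with $p=k$, $q=\ell-k$, one obtains $(-1)^{\ell-k}\binom{\ell-1}{k-1} = \binom{-k}{\ell-k}$. The sum in \eqref{4.22} therefore equals $\sum_{\ell=1}^{N}\binom{j-1}{\ell-1}\binom{-k}{\ell-k}$, and since the summands vanish outside the range $k \leq \ell \leq j$, this finite sum coincides with the bilateral sum over all $\ell \in \bbZ$. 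After reindexing $t=\ell-1$, identity (ii) applies with $r=j-1$, $s=-k$, $m=0$, $n=1-k$, producing $\binom{j-k-1}{j-k}$. This last binomial evaluates to $1$ precisely when $j=k$ (since $\binom{-1}{0} = 1$ by (i)) and to $0$ otherwise: for $j > k$ the upper index is a nonnegative integer strictly less than the lower, and for $j < k$ the lower index is negative so the binomial vanishes by convention.

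The main obstacle is nothing more than the careful bookkeeping of index ranges and sign conventions in (iii): one must verify that the finite sum may be legitimately extended to a bilateral sum for (ii) to apply, and that the resulting binomial $\binom{j-k-1}{j-k}$ is correctly evaluated under the convention $\binom{r}{-k}=0$. No new combinatorial ideas are required beyond the two classical identities (i) and (ii).
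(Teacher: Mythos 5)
Your proposal is correct. Note, however, that the paper offers no proof of Lemma \ref{L4.1} at all: it simply records the identities and refers the reader to Knuth \cite[pp.~58--59, Eqs.~(17), (22), (23)]{Kn97}. So there is nothing in the paper to compare against step by step; what you have done is supply a self-contained derivation where the authors rely on a citation. Your treatment of $(i)$ (factoring $(-1)$ out of the falling factorial) and $(ii)$ (coefficient extraction from $(1+x)^{r+s}=(1+x)^r(1+x)^s$, with the symmetry $\binom{r}{i}=\binom{r}{r-i}$ for the nonnegative integer $r$ converting the convolution into the stated bilateral form) is standard and sound. The derivation of $(iii)$ is the substantive part, and it checks out: $(-1)^{\ell-k}\binom{\ell-1}{k-1}=\binom{-k}{\ell-k}$ via symmetry and upper negation, the nonzero summands are confined to $k\leq\ell\leq j\leq N$ so the finite sum equals the bilateral one, and $(ii)$ with $r=j-1$, $s=-k$, $m=0$, $n=1-k$ yields $\binom{j-k-1}{j-k}$, which is $1$ exactly when $j=k$ and $0$ otherwise under the stated conventions. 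This ``extend to a bilateral sum, then apply $(i)$ and $(ii)$'' technique is precisely the one the authors themselves deploy later in \eqref{4.55e}, \eqref{4.79}, and \eqref{4.81}, so your argument for $(iii)$ is very much in the spirit of the paper even though the paper never writes it down.
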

In order to apply Theorem \ref{T3.4} to characterize $H_{Z,\rm K}$, we must determine the basis $\{\varphi_{k,1}\}_{k=1}^{2N}$ of $\ker(H_{Z,\max})$ that was shown to exist in Lemma \ref{L3.2}.  By integration, it is clear that
\begin{equation}\lb{4.27b}
\ker(H_{Z,\max}) = {\rm span}\, \{x^{j-1}\}_{j=1}^{2N}.
\end{equation}
We shall first calculate $\{\varphi_{k,1}\}_{k=1}^{2N}$ in the special case when $[a,b]=[0,1]$: let us relabel the basis $\{\varphi_{k,1}\}_{k=1}^{2N}$ as $\{p_k\}_{k=1}^{2N}$ in this case.  Thus, in the case $[a,b]=[0,1]$, we must determine $\{p_k\}_{k=1}^{2N}\subset {\rm span}\, \{x^{j-1}\}_{j=1}^{2N}$ such that for each $1\leq j\leq N$,
\begin{equation}
    \begin{split}
        p_k^{(j-1)}(0)&=\delta_{j,k}, \quad p_k^{(j-1)}(1)=0, \hspace*{1.2cm} 1\leq k\leq N,\\[1mm]
        p_k^{(j-1)}(0)&=0, \hspace*{.7cm} p_k^{(j-1)}(1)=\delta_{j+N,k}, \quad N+1\leq k\leq2N.
    \end{split}
\end{equation}

Lemma \ref{L3.2} guarantees, for each $1\leq k\leq 2N$, the existence of $p_k\in \ker(H_{Z,\max})$ with
\begin{equation}
\Gamma p_k =
\begin{pmatrix}
              p(0)\\
              p^{(1)}(0)\\
              \vdots\\
              p^{(N-1)}(0)\\
              p(1)\\
              p^{(1)}(1)\\
              \vdots\\
              p^{(N-1)}(1)
\end{pmatrix}=e_k,
\end{equation}
where $e_k=(\delta_{j,k})_{j=1}^{2N}$ denotes the $k$th standard basis vector in $\bbC^{2N}$.  In light of \eqref{4.27b}, there exist scalars $\{c_{\ell,k}\}_{\ell=1}^{2N}\subset\bbC$ such that
\begin{equation}\lb{4.32}
    p_k=\sum_{\ell=1}^{2N}c_{\ell, k}x^{\ell-1},\quad 1\leq k\leq 2N.
\end{equation}
Therefore,
\begin{equation}
    e_k=\Gamma{p_k}=\Gamma\left(\sum_{\ell=1}^{2N}c_{\ell, k}x^{\ell-1}\right)=\sum_{\ell=1}^{2N}c_{\ell, k}\Gamma x^{\ell-1},\quad 1\leq k\leq 2N,
\end{equation}
which can be recast as a matrix product
\begin{equation}\lb{4.33}
    \Lambda \begin{pmatrix}
    c_{1,k}\\
    c_{2,k}\\
    \vdots\\
    c_{2N,k}
    \end{pmatrix}=e_k,\quad 1\leq k\leq 2N,
\end{equation}
where
\begin{equation}
\Lambda := \left(\Gamma1\,|\,\Gamma x\,|\,\cdots\,|\,\Gamma x^{2N-1}\right)\in \bbC^{2N\times 2N}.
\end{equation}
The equalities in \eqref{4.33} may be summarized in matrix form as
\begin{align}
	\Lambda\begin{pmatrix}
        c_{1,1}&c_{1,2}&\dots&c_{1,2N}\\
        c_{2,1}&c_{2,2}&\dots&c_{2,2N}\\
        \vdots&\vdots&\ddots&\vdots\\
        c_{2N,1}&c_{2N,2}&\dots&c_{2N,2N}
    \end{pmatrix}=\left(e_1\,|\,e_2\,|\,\cdots\,|\,e_{2N}\right)=I_{2N}.
\end{align}
Thus, we deduce that
\begin{equation}\lb{4.36}
    \begin{pmatrix}
        c_{1,1}&c_{1,2}&\dots&c_{1,2N}\\
        c_{2,1}&c_{2,2}&\dots&c_{2,2N}\\
        \vdots&\vdots&\ddots&\vdots\\
        c_{2N,1}&c_{2N,2}&\dots&c_{2N,2N}
\end{pmatrix}=\Lambda^{-1}.
\end{equation}
Therefore, to retrieve the basis $\{p_k\}_{k=1}^{2N}$ for $\ker(H_{Z,\max})$ it is crucial to understand the matrix $\Lambda^{-1}$.  We observe that $\Lambda$ has a $2 \times 2$ block matrix form:
\begin{equation}
    \Lambda = \left(\begin{array}{c|c}
    A & 0_N \\
    \hline
    C & D
\end{array}\right)
\end{equation}
in which
\begin{align}
    A  &= \left(\frac{d^{j-1}}{dx^{j-1}}\big[x^{k-1}\big]\Big|_{x=0}\right)_{j,k=1}^N=\big(\delta_{j,k}(j-1)!\big)_{j,k=1}^N,\\
    C &= \left(\frac{d^{j-1}}{dx^{j-1}}\big[x^{k-1}\big]\Big|_{x=1}\right)_{j,k=1}^N=\left(\frac{(k-1)!}{(k-j)!}\right)_{j,k=1}^N,\\
    D &= \left(\frac{d^{j-1}}{dx^{j-1}}\big[x^{N+k-1}\big]\Big|_{x=1}\right)_{j,k=1}^N=\left(\frac{(N+k-1)!}{(N+k-j)!}\right)_{j,k=1}^N.\lb{4.41c}
\end{align}
It is clear that $A$ is invertible and that
\begin{equation}\lb{4.42}
    A^{-1} =\left(\delta_{j,k}\frac{1}{(j-1)!}\right)_{j,k=1}^N.
\end{equation}
Next, we show that $D$ is invertible and obtain an explicit form for $D^{-1}$.
\begin{lemma}\lb{L4.2}
    The matrix $D$ defined by \eqref{4.41c} is invertible and
    \begin{equation}\lb{4.43}
        D^{-1} =
        \left(\sum_{\ell=1}^N \frac{(-1)^{j+\ell}}{(k-1)!}
        \binom{\ell-1}{j-1}\binom{N-1+\ell-k}{\ell-k}\right)_{j,k=1}^N.
    \end{equation}
\end{lemma}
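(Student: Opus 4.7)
The plan is to verify directly that the matrix $E$ defined by the right-hand side of \eqref{4.43} satisfies $ED=I_N$; since $E$ and $D$ are both $N\times N$, this simultaneously establishes that $D$ is invertible and that $D^{-1}=E$.

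The key preliminary observation is the factorization
\[
D_{k,m}=\frac{(N+m-1)!}{(N+m-k)!}=(k-1)!\binom{N+m-1}{k-1},
\]
which causes the $(k-1)!$ appearing in the denominator of $E_{j,k}$ to cancel when the product $(ED)_{j,m}=\sum_{k=1}^N E_{j,k}D_{k,m}$ is formed. After this cancellation and an interchange of the order of summation, $(ED)_{j,m}$ is expressed as a sum over $\ell$ of $\binom{\ell-1}{j-1}$ times an inner sum over $k$ of a product of two binomial coefficients (together with the accompanying sign factors).

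The inner sum over $k$ is then handled via a Vandermonde-type convolution. First I would apply identity \eqref{4.20} with $p=N$ and $q=\ell-k$ to rewrite $\binom{N-1+\ell-k}{\ell-k}=(-1)^{\ell-k}\binom{-N}{\ell-k}$. Combining this sign with the $(-1)^{k}$ factor arising from the formula for $E_{j,k}$ absorbs the $k$-dependent sign into an overall factor of $(-1)^\ell$, and the inner sum reduces to
\[
\sum_{k}\binom{N+m-1}{k-1}\binom{-N}{\ell-k}=\binom{m-1}{\ell-1},
\]
which is an instance of identity \eqref{4.21} (equivalently, the coefficient of $z^{\ell-1}$ in the elementary generating-function identity $(1+z)^{N+m-1}(1+z)^{-N}=(1+z)^{m-1}$).

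Substituting this back collapses $(ED)_{j,m}$ to a single sum
\[
\sum_{\ell=1}^N(-1)^{j+\ell}\binom{\ell-1}{j-1}\binom{m-1}{\ell-1},
\]
which, upon reordering the two binomial factors, is precisely the left-hand side of identity \eqref{4.22} with $j$ and $k$ there replaced by $m$ and $j$, respectively. Hence it equals $\delta_{j,m}$, completing the verification that $ED=I_N$.

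The main obstacle is purely organizational: one must simultaneously keep track of sign exponents accumulated from three sources (the $(-1)$-factors in $E_{j,k}$, the application of \eqref{4.20}, and the alternating sign in \eqref{4.22}), and arrange the binomial coefficients in the correct order for identities \eqref{4.21} and \eqref{4.22} to apply as written. There is no conceptual difficulty beyond this careful bookkeeping; once the signs are collected correctly, the two Vandermonde-type identities do all the work in sequence.
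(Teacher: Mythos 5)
Your overall strategy---verifying $ED=I_N$ directly, where $E$ denotes the right-hand side of \eqref{4.43}---is in substance the same computation as the paper's: the paper proves $QA^{-1}D=P$ (which is your inner Vandermonde step, using \eqref{4.20} and \eqref{4.21}) and then inverts the Pascal matrix $P$ via \eqref{4.22} (your outer step), so the two arguments differ only in packaging.

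There is, however, a genuine problem with the sign bookkeeping, and it occurs precisely at the step you flag as the main organizational hazard. You write that the $(-1)^{\ell-k}$ produced by applying \eqref{4.20} is absorbed by ``the $(-1)^{k}$ factor arising from the formula for $E_{j,k}$.'' No such factor exists in \eqref{4.43} as printed: the sign there is $(-1)^{j+\ell}$, which does not depend on $k$. Tracking the signs faithfully to the printed formula, the inner sum is
\begin{equation*}
\sum_{k=1}^{N}\binom{N-1+\ell-k}{\ell-k}\binom{N+m-1}{k-1}
=(-1)^{\ell}\sum_{k}(-1)^{k}\binom{-N}{\ell-k}\binom{N+m-1}{k-1},
\end{equation*}
and the surviving $(-1)^{k}$ blocks the application of \eqref{4.21}; the sum is not $\pm\binom{m-1}{\ell-1}$ (for $N=2$, $m=1$, $\ell=2$ it equals $4$, while $\binom{0}{1}=0$), and indeed $ED\neq I_N$ (for $N=2$ the printed matrix has $(1,1)$ entry $-1$, whereas $(D^{-1})_{1,1}=3$). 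What your computation actually uncovers is a typo in \eqref{4.43}: the exponent should be $j+k$, not $j+\ell$. This is what multiplying out $D^{-1}=P^{-1}QA^{-1}$ with \eqref{4.42}, \eqref{4.48c}, and \eqref{4.49c} gives (the factor $(-1)^{j+\ell}(-1)^{\ell-k}=(-1)^{j+k}$), and it is the sign that reappears in the paper's later formulas \eqref{4.49} and \eqref{4.53}--\eqref{4.56}. With the exponent corrected to $j+k$, the $k$-dependent sign cancels exactly as you describe and your two-step application of \eqref{4.21} and \eqref{4.22} goes through. So the idea is right and your argument is a correct proof of the corrected statement, but as a proof of the lemma as literally printed it fails at the Vandermonde step; you should state explicitly that you are proving the formula with $(-1)^{j+k}$.
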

\begin{proof}
    It suffices to prove that
    \begin{equation}\lb{4.44}
        QA^{-1}D = P
    \end{equation}
    in which
    \begin{equation}\lb{4.48c}
        Q = \left((-1)^{j-k}\binom{N-1+j-k}{j-k}\right)_{j,k=1}^N \quad \text{ and } \ P = \left(\binom{k-1}{j-1}\right)_{j,k=1}^N,
    \end{equation}
    where $Q$ and $P$ arise naturally when one performs row reduction operations on $D$. In fact, the matrix $P$, whose entries give Pascal's triangle in the upper triangle of the matrix, is invertible with the inverse
    \begin{equation}\lb{4.49c}
        P^{-1}= \Bigg((-1)^{j+k}\binom{k-1}{j-1}\Bigg)_{j,k=1}^N,
    \end{equation}
    as one can verify via Lemma \ref{L4.1} part $(iii)$.  Thus, \eqref{4.44} implies
    \begin{equation}\lb{4.50c}
        (P^{-1}QA^{-1})D=I_N,
    \end{equation}
    which proves the lemma. To show \eqref{4.44}, we observe that
    \begin{align}
        (QA^{-1}D)_{j,k}
        &=\sum_{s,\ell = 1}^N (-1)^{j+s} \binom{N-1+j-s}{j-s} \frac{\delta_{s,\ell}(k+N-1)!}{(s-1)!(k+N-\ell)!}\no\\
        &=\sum_{s= 1}^N (-1)^{j+s} \binom{N-1+j-s}{j-s} \frac{(k+N-1)!}{(s-1)!(k+N-s)!}\no\\
        &=\sum_{s=1}^N (-1)^{j+s} \binom{N-1+j-s}{j-s} \binom{N+k-1}{s-1}.\lb{4.55d}
    \end{align}
    We observe that the second combination in \eqref{4.55d} is only nonzero for $1\leq s\leq N+k$ and the first is only nonzero for $s\leq j\leq N$. Therefore, all the nonzero terms for integer $s$ fall between $1$ and $N$. Thus, we reindex the sum in the final expression in \eqref{4.55d} to be over all the integers, and the calculation may be continued as follows:
    \begin{align}
    &\sum_{s=-\infty}^{\infty} (-1)^{j+s} \binom{N-1+j-s}{j-s} \binom{N+k-1}{s-1}\lb{4.55e}\\
        &\quad=\sum_{s=-\infty}^{\infty} \binom{-N}{-N-j+s} \binom{N+k-1}{s-1}\no\\
        &\quad=\binom{N+k-1-N}{N+k-1+1-N-j} \no\\
        &\quad=\binom{k-1}{k-j}\no\\
        &\quad=\binom{k-1}{j-1}\no\\
        &\quad=P_{j,k},\quad 1 \leq j,k \leq N,\no
    \end{align}
    as desired.  In \eqref{4.55e}, the first and second equalities follow from Lemma \ref{L4.1} parts $(i)$ and $(ii)$ , respectively.  The equality in \eqref{4.43} follows by using \eqref{4.42}, \eqref{4.48c}, and \eqref{4.49c} to compute matrix elements in $D^{-1}=P^{-1}QA^{-1}$.
    \end{proof}
It is now a straightforward calculation to verify that $\Lambda^{-1}$ is given, in block form, by
\begin{equation}\lb{4.50b}
    \Lambda^{-1}= \left(\begin{array}{c|c}
        A & 0_N \\
        \hline
        C & D
    \end{array}\right)^{-1} = \left(\begin{array}{c|c}
        A^{-1} & 0_N \\
        \hline
        -D^{-1}CA^{-1} & D^{-1}
    \end{array}\right),
\end{equation}
where $A^{-1}$ and $D^{-1}$ are given in \eqref{4.42} and \eqref{4.43}, respectively, and
\begin{equation} \lb{4.49}
	\begin{split}
		(D^{-1}CA^{-1})_{j,k}=
		\sum_{r,\ell=1}^N
		\frac{(-1)^{j+r}}{(r-1)!(k-r)!}\binom{\ell-1}{j-1}\binom{N+\ell-r-1}{N-1},&\\
		1\leq j,k\leq N.
	\end{split}
\end{equation}

Recalling \eqref{4.32} and \eqref{4.36}, for each $1\leq k\leq 2N$, we obtain an explicit form for each polynomial $p_k$ by reading off the $k$th column $(c_{\ell,k})_{\ell=1}^{2N}$ of $\Lambda^{-1}$ and writing the linear combination in \eqref{4.32}.  For $1\leq k\leq N$, we obtain:
    \begin{align}
        p_k &=\frac{x^{k-1}}{(k-1)!}-
         \sum_{r,\ell,s=1}^N
        \frac{(-1)^{s+r}x^{N-1+s}}{(r-1)!(k-r)!} \binom{\ell-1}{s-1}\binom{N+\ell-r-1}{N-1},\no\\
    p_{N+k} &= \sum_{s,\ell=1}^N \frac{(-1)^{s+k}x^{N-1+s}}{(k-1)!} \binom{\ell-1}{s-1}\binom{N+\ell-k-1}{N-1},
    \end{align}
which yields $\{p_k\}_{k=1}^{2N}$.

Returning to the general case of arbitrary $[a,b]$, the basis $\{\varphi_{k,1}\}_{k=1}^{2N}$ for the subspace $\ker(H_{Z,\max})$ may be obtained from $\{p_k\}_{k=1}^{2N}$ by scaling and translation via
\begin{equation}\lb{4.30}
    \varphi_{k,1}(x)=
        \begin{cases}
            (b-a)^{k-1}p_k\left(\displaystyle{\frac{x-a}{b-a}}\right),& 1\leq k\leq N,\\[3mm]
            (b-a)^{k-N-1}p_k\left(\displaystyle{\frac{x-a}{b-a}}\right),& N+1\leq k\leq 2N,
    \end{cases}\quad x\in[a,b].
\end{equation}
In this way, the only nonzero derivative of $\varphi_{k,1}$ is either the $(k-1)$st derivative at $x=a$ or the $(k-N-1)$st derivative at $x=b$, and by the chain rule, for $1\leq j\leq N$,
\begin{equation}
    \begin{split}
        \vhi_{k,1}^{(j-1)}(a)&=\delta_{j,k},\hspace*{1cm} 1\leq k\leq N,\\[1mm]
        \vhi_{k,1}^{(j-1)}(b)&=\delta_{j+N,k},\quad N+1\leq k\leq 2N.
    \end{split}
\end{equation}
Explicitly, for $1 \leq k \leq N$, we obtain:
    \begin{align}
        \varphi_{k,1} &=\frac{(x-a)^{k-1}}{(k-1)!}\no\\
        &\quad- 
    \sum_{s,r,\ell=1}^N \frac{(-1)^{s+r}(x-a)^{N-1+s}}{(r-1)!(k-r)!(b-a)^{N-k+s}} \binom{\ell-1}{s-1}\binom{N+\ell-r-1}{N-1},\no\\
    \varphi_{N+k,1} &= 
    \sum_{s,\ell=1}^N \frac{(-1)^{s+k}(x-a)^{N-1+s}}{(k-1)!(b-a)^{N-k+s}} \binom{\ell-1}{s-1}\binom{N+\ell-k-1}{N-1}.
    \end{align}
Thus, the matrices $A_{\rm K}$ and $B_{\rm K}$ may be characterized as follows:
    \begin{equation}\lb{4.55b}
    A_{\rm K} =
    \left( \begin{array}{c|c}
        -\Phi_{0}(a) & I_N \\
        \hline
        \Phi_{0}(b) & 0_N \\
    \end{array}\right)\quad\text{and}\quad
    B_{\rm K} =
    \left( \begin{array}{c|c}
        \Phi_{N}(a) & 0_N \\
        \hline
        -\Phi_{N}(b) & I_N \\
    \end{array}\right)
    \end{equation}
in which
    \begin{align}
        \Phi_0(a)
            &\no = \Bigg(
                \frac{-(N-1+j)!}{(b-a)^{N-k+j}}
                \sum_{r,\ell=1}^N
                \bigg(
                    \frac{(-1)^{j+r}}{(r-1)!(k-r)!}
            \\
            &\hspace*{4.8cm}
                    \times\binom{\ell-1}{j-1}\binom{N+\ell-r-1}{N-1}
                \bigg)
            \Bigg)_{j,k=1}^N, \lb{4.53}&\\
        \Phi_0(b)
            &\no = \Bigg(
                \frac{-1}{(b-a)^{N-k+j}}
                \sum_{s,r,\ell=1}^N
                \bigg(
                    \frac{(-1)^{r+s}(N-1+s)!}{(r-1)!(k-r)!(s-j)!}
            \\
            &\hspace*{5.3cm}
                    \times\binom{\ell-1}{j-1}\binom{N+\ell-r-1}{N-1}
                \bigg)
            \Bigg)_{j,k=1}^N, \lb{4.54}&\\
        \Phi_N(a)
            &\no= \Bigg(
                \frac{(N-1+j)!}{(b-a)^{N-k+j}}
                \sum_{\ell=1}^N
                \bigg(
                    \frac{(-1)^{j+k}}{(k-1)!}
            \\
            &\hspace*{4.6cm}
                    \times\binom{\ell-1}{j-1}\binom{N+\ell-k-1}{N-1}
                \bigg)
            \Bigg)_{j,k=1}^N, \lb{4.55}&\\
        \Phi_N(b)
            &\no= \Bigg(
                \frac{1}{(b-a)^{N-k+j}}
                \sum_{s,\ell=1}^N
                \bigg(
                    \frac{(-1)^{s+k}(N-1+s)!}{(k-1)!(s-j)!}
            \\
            &\hspace*{4.7cm}
                    \times\binom{\ell-1}{j-1}\binom{N+\ell-k-1}{N-1}
                \bigg)
            \Bigg)_{j,k=1}^N. \lb{4.56}&
    \end{align}
Define $T_{\rm K}$ to be the upper triangular Toeplitz matrix
\begin{equation}\lb{4.57}
    T_{\rm K} = \left(\frac{(b-a)^{k-j}}{(k-j)!}\right)_{j,k=1}^{2N}.
\end{equation}
We now show that
\begin{equation}\lb{4.60b}
A_{\rm K} = B_{\rm K}T_{\rm K}.
\end{equation}
Observe that $T_{\rm K}$ can be represented as a $2 \times 2$ block matrix
\begin{equation}\lb{4.62b}
    T_{\rm K} = \left(\begin{array}{c|c}
        T_1 & T_2  \\
        \hline
        0 & T_1
    \end{array}\right)
\end{equation}
in which
\begin{equation}\lb{4.59}
    T_1 = \left(\frac{(b-a)^{k-j}}{(k-j)!}\right)_{j,k=1}^{N} \, \text{ and } \
    T_2 = \left(\frac{(b-a)^{k-j}}{(N+k-j)!}\right)_{j,k=1}^{N}.
\end{equation}
Thus, using \eqref{4.55b} and \eqref{4.62b}, $B_{\rm K}T_{\rm K}$ may be computed via blockwise matrix multiplication:
\begin{equation}\lb{4.64b}
B_{\rm K}T_{\rm K}=\left(\begin{array}{c|c}
    \Phi_N(a)T_1 & \Phi_N(a)T_2 \\
     \hline
     -\Phi_N(b)T_1& -\Phi_N(b)T_2 +T_1
\end{array} \right).
\end{equation}
Given \eqref{4.64b} and the $2\times 2$ block structure of $A_{\rm K}$ (cf.~\eqref{4.55b}), the equality in \eqref{4.60b} translates to the following four equalities:
\begin{align}
	-\Phi_N(a)T_1 &= \Phi_0(a),\lb{4.59a}\\
	-\Phi_N(b)T_1 &= \Phi_0(b),\lb{4.60}\\
	\Phi_N(a)T_2 &= I_N,\lb{4.61}\\
	\Phi_N(b)T_2 &= T_1.\lb{4.62}
\end{align}
The identities in \eqref{4.59a} and \eqref{4.60} are immediately evident when the matrix product is written out for a general component. Beginning with \eqref{4.59a}, we have
\begin{align}
    &\big(-\Phi_N(a)T_1\big)_{j,k} \\
    &\no\quad
        =-\sum_{r=1}^{N}(\Phi_N(a))_{j,r}(T_1)_{r,k}\\
    &\no\quad
        =-\sum_{r=1}^{N}\left[
        \frac{(N-1+j)!}{(b-a)^{N-r+j}} \sum_{\ell=1}^N\left(
            \frac{(-1)^{j+r}}{(r-1)!}\binom{\ell-1}{j-1}\binom{N+\ell-r-1}{N-1}
        \right)
        \frac{(b-a)^{k-r}}{(k-r)!}
    \right]\\
    &\no\quad
        =\frac{-(N-1+j)!}{(b-a)^{N-k+j}}\sum_{r,\ell=1}^N\left[
        \frac{(-1)^{j+r}}{(r-1)!(k-r)!}\binom{\ell-1}{j-1}\binom{N+\ell-r-1}{N-1}
        \right]\\
    &\no\quad
        =\big(\Phi_0(a)\big)_{j,k},\quad 1\leq j,k\leq N,
\end{align}
and then for \eqref{4.60}:
\begin{align}
    &\big(-\Phi_N(b)T_1\big)_{j,k} \\
    &\no\quad=
    -\sum_{r=1}^{N}(\Phi_N(a))_{j,r}(T_1)_{r,k}\\
    &\no\quad=-\sum_{r=1}^{N}
    \Bigg[
        \frac{1}{(b-a)^{N-r+j}} \sum_{\ell, s=1}^N
            \bigg(
            \frac{(-1)^{s+r}(N-1+s)!}{(r-1)!(s-j)!}
    \\
    &\no \hspace*{5.7cm}
            \times\binom{\ell-1}{j-1}\binom{N+\ell-r-1}{N-1}
        \bigg)
        \frac{(b-a)^{k-r}}{(k-r)!}
    \Bigg]\\
    &\no\quad
    =\frac{-1}{(b-a)^{N-k+j}}\sum_{r,s,\ell=1}^N\left[
        \frac{(-1)^{s+r}(N-1+s)!}{(r-1)!(k-r)!(s-j)!}\binom{\ell-1}{j-1}\binom{N+\ell-r-1}{N-1}
    \right]\\
    &\no \quad=\left(\Phi_0(b)\right)_{j,k},\quad 1\leq j,k\leq N.
\end{align}
The justifications for \eqref{4.61} and \eqref{4.62} make extensive use of the combinatorial identities in Lemma \ref{L4.1}.  To show that \eqref{4.61} holds, we compute as follows:
\begin{align}
     &\big(\Phi_N(a)T_2\big)_{j,k}\lb{4.77}\\ 
    &\no\quad=\sum_{r=1}^{N}(\Phi_N(a))_{j,r}(T_2)_{r,k}\\ \no
    &\quad=\sum_{r=1}^{N}\bigg[
        \frac{(N-1+j)!}{(b-a)^{N-r+j}}\\ \no &\hspace*{1.5cm}\times\sum_{\ell=1}^N\left(
            \frac{(-1)^{j+r}}{(r-1)!}\binom{\ell-1}{j-1}\binom{N+\ell-r-1}{N-1}
        \right)
        \frac{(b-a)^{N+k-r}}{(N+k-r)!}
    \bigg]\\ \no
    &\quad=\frac{(N-1+j)!}{(b-a)^{j-k}}\frac{1}{(N-1+k)!}\\ \no &\hspace*{1.5cm}\times\sum_{\ell=1}^N\sum_{r=1}^{N}\left[
        \frac{(-1)^{j+r}(N-1+k)!}{(r-1)!(N+k-r)!}\binom{\ell-1}{j-1}\binom{N+\ell-r-1}{N-1}
    \right]\\ \no
    &\quad=\frac{(N-1+j)!}{(b-a)^{j-k}(N-1+k)!}\\ \no &\hspace*{1.5cm}\times\sum_{\ell=1}^N\binom{\ell-1}{j-1}\sum_{r=1}^{N}\left[
        (-1)^{j-\ell}(-1)^{\ell-r}\binom{N-1+k}{r-1}\binom{N+\ell-r-1}{\ell-r}
    \right].\no
\end{align}
Within the sum indexed by $r$, we observe that the first binomial coefficient is zero for $r\leq 0$, and the second binomial coefficient is zero whenever $\ell-r\leq0$. Since $\ell\leq N$, we see that for any $\ell$, the only possible nonzero terms within the sum are those such that $1\leq r\leq N$. Thus, the calculation may be continued as follows:
\begin{align}
    &\frac{(N-1+j)!}{(b-a)^{j-k}(N-1+k)!}\lb{4.79}\\ \no &\hspace*{1.5cm}\times\sum_{\ell=1}^N\binom{\ell-1}{j-1}\sum_{r=-\infty}^{\infty}\left[
        (-1)^{j-\ell}(-1)^{\ell-r}\binom{N-1+k}{r-1}\binom{N+\ell-r-1}{\ell-r}
    \right]\\\no
    &\quad=\frac{(N-1+j)!}{(b-a)^{j-k}(N-1+k)!}\\ \no &\hspace*{1.5cm}\times\sum_{\ell=1}^N\binom{\ell-1}{j-1}\sum_{r=-\infty}^{\infty}\left[
        (-1)^{j-\ell}\binom{N-1+k}{r-1}\binom{-N}{\ell-r}
    \right]\\\no
    &\quad=\frac{(N-1+j)!}{(b-a)^{j-k}(N-1+k)!}\lb{4.78}\\ \no &\hspace*{1.5cm}\times\sum_{\ell=1}^N(-1)^{j-\ell}\binom{\ell-1}{j-1}\sum_{r=-\infty}^{\infty}\left[
        \binom{N-1+k}{r-1}\binom{-N}{-N-\ell+r}
    \right]\\ \no
    &\quad=\frac{(N-1+j)!}{(b-a)^{j-k}(N-1+k)!}\\ \no &\hspace*{1.5cm}\times\sum_{\ell=1}^N\left[(-1)^{j-\ell}\binom{\ell-1}{j-1}\binom{N-1+k+(-N)}{N-1+k-(-1)+(-N-\ell)}
    \right]\\ \no
    &\quad=\frac{(N-1+j)!}{(b-a)^{j-k}(N-1+k)!}\sum_{\ell=1}^N\left[(-1)^{j-\ell}\binom{\ell-1}{j-1}\binom{k-1}{k-\ell}
    \right]\\ \no
    &\quad=\frac{(N-1+j)!}{(b-a)^{j-k}(N-1+k)!}\delta_{j,k}\\ \no
    &\quad=\delta_{j,k}\\
    &\quad=(I_N)_{j,k},\quad 1\leq j,k\leq N,\no
\end{align}
where the second, third and fifth equalities in \eqref{4.79} follow from Lemma \ref{L4.1} parts $(i)$, $(ii)$ and $(iii)$, respectively. Finally, for \eqref{4.62}:
\begin{align}
    & (\Phi_N(b)T_2)_{j,k}\\
    &\no\quad =
    \sum_{r=1}^N \frac{(b-a)^{N-r-k}}{(b-a)^{N-r+j}(N+k-r)!}
    \\
    &\no\hspace*{1.5cm}
    \times\sum_{\ell,s=1}^N
            \left(
            \frac{(-1)^{s+r}(N-1+s)!}{(r-1)!(s-j)!}\binom{\ell-1}{j-1}\binom{N+\ell-r-1}{N-1}
            \right)
    \\
    &\no \quad=
    (b-a)^{k-j} \frac{(N+k-1)!}{(N+k-1)!}
    \sum_{\ell, s=1}^N
            \Bigg(
            \frac{(-1)^{s+r}(N-1+s)!}{(s-j)!}\binom{\ell-1}{j-1}\\
    &\no \hspace*{6.19cm}
    \times\sum_{r=1}^N \binom{N+\ell-r-1}{N-1} \frac{1}{(r-1)!(N+k-r)!}
    \Bigg)
    \\ \no
    &\quad =
    (b-a)^{k-j}
    \sum_{\ell, s=1}^N
            \Bigg(
            \frac{(-1)^{s+\ell}(N-1+s)!}{(s-j)!(N+k-1)!}\binom{\ell-1}{j-1}\\
    &\no \hspace*{4.3cm}
    \times\sum_{r=1}^N (-1)^{\ell-r}\binom{N+k-1}{r-1}\binom{N+\ell-r-1}{\ell-r}
    \Bigg).\no
\end{align}
 Within the sum indexed by $r$, similar to the previous calculation, we observe that the first binomial coefficient is zero for $r\leq 0$, and the second binomial coefficient is zero whenever $\ell-r\leq0$. Since $\ell\leq N$, we see that for any $\ell$, the only possible nonzero terms within the sum are those such that $1\leq r\leq N$. Thus, the calculation may be continued as follows:
\begin{align}
    &
    (b-a)^{k-j}
    \sum_{\ell, s=1}^N
            \Bigg(
            \frac{(-1)^{s+\ell}(N-1+s)!}{(s-j)!(N+k-1)!}\binom{\ell-1}{j-1} \lb{4.81}\\
    &\no \hspace*{4.3cm}
    \times\sum_{r=-\infty}^{\infty} (-1)^{\ell-r}\binom{N+k-1}{r-1}\binom{N+\ell-r-1}{\ell-r}
    \Bigg)\\ \no
    &\quad =
    (b-a)^{k-j}
    \sum_{\ell, s=1}^N
            \left(
            \frac{(-1)^{s+\ell}(N-1+s)!}{(s-j)!(N+k-1)!}\binom{\ell-1}{j-1}
    \sum_{r=-\infty}^{\infty}
    \binom{N+k-1}{r-1}\binom{-N}{\ell-r}
    \right)
    \\
    &\no \quad =
    (b-a)^{k-j}
    \sum_{\ell, s=1}^N
            \Bigg(
            \frac{(-1)^{s+\ell}(N-1+s)!}{(s-j)!(N+k-1)!}\binom{\ell-1}{j-1}\\ 
    &\no \hspace*{5cm}
    \times\sum_{r=-\infty}^{\infty}
    \binom{N+k-1}{-1+r}\binom{-N}{(-N-\ell)+r}
    \Bigg)
    \\ \no
    &\quad =
    (b-a)^{k-j}
    \sum_{\ell, s=1}^N
            \left(
            \frac{(-1)^{s+\ell}(N-1+s)!}{(s-j)!(N+k-1)!}\binom{\ell-1}{j-1}
    \binom{k-1}{\ell-1}
    \right)
    \\ \no
    &\quad =
    (b-a)^{k-j}
    \sum_{s=1}^N
            \left(
            \frac{(N-1+s)!}{(s-j)!(N+k-1)!}
    \sum_{\ell=1}^N(-1)^{s+\ell}\binom{\ell-1}{j-1}\binom{k-1}{\ell-1}
    \right)
    \\ \no
    &\quad =
    (b-a)^{k-j}
    \sum_{s=1}^N
            \left(
            \frac{(N-1+s)!}{(s-j)!(N+k-1)!}
    \delta_{s,k}
    \right)
    \\ \no
    &\quad =
    \frac{(b-a)^{k-j}}{(k-j)!}
    \\
    &\no \quad =(T_1)_{j,k},\quad 1\leq j,k\leq N,
\end{align}
where the first, third, and fifth equalities in \eqref{4.81} use Lemma \ref{L4.1} parts $(i)$, $(ii)$, and $(iii)$, respectively.
 
By Proposition \ref{p3.5} we know that $B_{\rm K}$ is invertible, so by \eqref{4.60b}, we have shown that
\begin{equation}
    T_{\rm K}=B_{\rm K}^{-1}A_{\rm K}.
\end{equation}
Hence, we recover the following result due to Granovskyi and Oridoroga \cite{GO18a}:
\begin{theorem}[{\cite[Theorem 3.1]{GO18a}}]
    If $Z$ is defined by \eqref{4.24b}, then the domain of the Krein--von Neumann extension of $H_{Z,\min}$ is characterized by
        \begin{equation}\dom(H_{Z,\textrm{K}}) = \left\{y \in \dom(H_{Z,\max})\left|
        \begin{pmatrix}
        y(b) \\ y^{(1)}(b) \\ \vdots \\ y^{(2N-1)}(b)
        \end{pmatrix}=T_{\rm K}\begin{pmatrix}
        y(a) \\ y^{(1)}(a) \\ \vdots \\ y^{(2N-1)}(a)
        \end{pmatrix}\right.
        \right\},
        \end{equation}
    where $T_{\rm K}$ is the Toeplitz upper triangular matrix defined by \eqref{4.57}.
\end{theorem}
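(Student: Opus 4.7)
The plan is to derive this theorem as a direct consequence of Theorem \ref{T3.4} and Proposition \ref{p3.5}, once the special basis $\{\varphi_{k,1}\}_{k=1}^{2N}$ of $\ker(H_{Z,\max})$ has been computed explicitly. Since $\tau_{{}_Z}y = (-1)^N y^{(2N)}$, the kernel of $H_{Z,\max}$ is exactly the space of polynomials of degree at most $2N-1$, so each $\varphi_{k,1}$ may be written as $\sum_{\ell=1}^{2N} c_{\ell,k} x^{\ell-1}$ and the defining condition $\Gamma \varphi_{k,1} = e_k$ becomes a linear system whose coefficient matrix $\Lambda$ has a $2\times 2$ block-triangular structure (the upper-right block vanishes because evaluating $x^{N+k-1}$ and its first $N-1$ derivatives at the left endpoint yields zero). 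Inverting $\Lambda$ blockwise gives closed-form expressions for all $\varphi_{k,1}$ on $[0,1]$; the general interval $[a,b]$ is then handled by scaling and translation, using the chain rule to confirm that the rescaled polynomials still satisfy the analogue of \eqref{3.10}.

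With the basis in hand, I would assemble the matrices $\Phi_0(a)$, $\Phi_0(b)$, $\Phi_N(a)$, $\Phi_N(b)$ defined in \eqref{3.29}, and hence the boundary condition matrices $A_{\rm K}$ and $B_{\rm K}$ from Theorem \ref{T3.4}. By Proposition \ref{p3.5}, the theorem is equivalent to the single matrix identity $T_{\rm K} = B_{\rm K}^{-1}A_{\rm K}$, or equivalently $A_{\rm K} = B_{\rm K}T_{\rm K}$. Writing $T_{\rm K}$ in $2\times 2$ block form with upper-triangular Toeplitz blocks $T_1$ and $T_2$ of size $N$, a blockwise multiplication reduces this identity to the four smaller identities
\begin{align*}
    -\Phi_N(a)T_1 &= \Phi_0(a), & -\Phi_N(b)T_1 &= \Phi_0(b), \\
    \Phi_N(a)T_2 &= I_N, & \Phi_N(b)T_2 &= T_1.
\end{align*}

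The first two identities should follow quickly by a direct entrywise matrix multiplication, since the inner sum over $r$ simply re-packages the common structure shared by $\Phi_0$ and $\Phi_N$. The main obstacle will be the latter two identities, which require showing that certain triple sums of products of binomial coefficients and factorials telescope to a Kronecker delta or to a Toeplitz entry. My approach is to handle each inner sum using the combinatorial machinery collected in Lemma \ref{L4.1}: first apply the negation identity \eqref{4.20} to rewrite $\binom{N+\ell-r-1}{\ell-r}$ as $\binom{-N}{\ell-r}$; then observe that the support of the summand in the variable $r$ is contained in $\{1,\ldots,N\}$, which justifies extending the sum over all integers; next apply the Vandermonde-type convolution \eqref{4.21} to collapse the inner sum to a single binomial coefficient; and finally invoke the orthogonality relation \eqref{4.22} to produce the required Kronecker delta. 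Once these four block identities are in hand, Proposition \ref{p3.5} delivers $B_{\rm K}^{-1}A_{\rm K} = T_{\rm K}$ and Theorem \ref{T3.4} yields the claimed characterization of $\dom(H_{Z,\rm K})$.
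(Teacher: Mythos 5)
Your proposal is correct and follows essentially the same route as the paper: explicit computation of the polynomial basis via blockwise inversion of $\Lambda$ on $[0,1]$, rescaling to $[a,b]$, reduction of $A_{\rm K}=B_{\rm K}T_{\rm K}$ to the same four block identities, and verification of the nontrivial two via the negation identity, the Vandermonde convolution, and the orthogonality relation of Lemma \ref{L4.1}. No gaps to report.
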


\begin{remark}
The above example reproduces, in full generality, the characterization of the Krein--von Neumann extension given in \cite{GO18a}. While \cite{GO18a} presents the  result with an elegant proof based on Taylor polynomials, our construction based on Theorem \ref{T3.4} and Proposition \ref{p3.5} requires the basis $\{\varphi_{k,1}\}_{k=1}^{2N}$ of the kernel of the maximal operator corresponding to Lemma \ref{L3.2}.  Since this basis may be of some independent interest, we have presented the complete details of its construction here.\hfill$\diamond$
\end{remark}

\medskip
 
\noindent {\bf Acknowledgments.}
The research of the authors was supported by the National Science Foundation under Grant DMS-1852288.


\end{document}